\newtheorem{theorem}{Theorem}
\newtheorem{lemma}[theorem]{Lemma}
\newtheorem{proposition}[theorem]{Proposition}
\newtheorem{corollary}[theorem]{Corollary}
\theoremstyle{definition}
\newtheorem{definition}[theorem]{Definition}
\newtheorem{example}[theorem]{Example}
\theoremstyle{remark}
\newtheorem{remark}[theorem]{Remark}
\newcommand{\HHH}{\mathcal{ H}}
\newcommand{\FF}{\mathcal{ F}}
\def\bZ{{\mathbb Z}}
\def\bR{{\mathbb R}}
\def\bH{{\mathbb H}}
\def\bS{{\mathbb S}}
\newcommand\Cont{\operatorname{Cont}}
\newcommand\Ker{\operatorname{Ker}}
\newcommand\Diff{\operatorname{Diff}}
\begin{document}

\title{On normal contact pairs}
\author{Gianluca~Bande}
\address{Dipartimento di Matematica e Informatica, Universit{\`a} degli studi di Cagliari, Via Ospedale 72, 09124 Cagliari, Italy}
\email{gbande{\char'100}unica.it}
\author{Amine~Hadjar}
\address{Laboratoire de Math{\'e}matiques, Informatique et
Applications, Universit{\'e} de Haute Alsace - 4 Rue de Fr{\`e}res
Lumi{\`e}re, 68093 Mulhouse Cedex, France}
\email{mohamed.hadjar{\char'100}uha.fr}

\thanks{The first author was supported by the MIUR Project: \textit{ Riemannian metrics and differentiable manifolds}--P.R.I.N. $2005$ and by a Visiting Professor fellowship at Universit\'e de Haute Alsace - Mulhouse in august 2007.}
\subjclass[2000]{Primary 53C15; Secondary 53D15, 53C12, 53D35.}
\keywords{Complex structure; almost contact structure; contact
pair; foliation}
\begin{abstract}
We consider manifolds endowed with a contact pair structure. To
such a structure are naturally associated two almost complex
structures. If they are both integrable, we call the structure a
normal contact pair. We generalize the Morimoto's Theorem on
product of almost contact manifolds to flat bundles. We construct
some examples on Boothby--Wang fibrations over contact-symplectic
manifolds. In particular, these results give new methods to
construct complex manifolds.
\end{abstract}

\maketitle

\section{Introduction}

A contact pair on a manifold is a pair of one-forms $\alpha_1$ and
$\alpha_2$ of constant and complementary classes, for which
$\alpha_1$ induces a contact form on the leaves of the
characteristic foliation of $\alpha_2$, and vice versa. This
notion, considered in \cite{Bande1,BH}, was firstly introduced in \cite{Blair2} by the name \textit{bicontact} and further studied in \cite{Abe}.

In \cite{BH2} we considered the notion of contact pair structure on a manifold $M$, that is a
contact pair $(\alpha_1, \alpha_2)$ together with a tensor field $\phi$ on $M$, of type $(1,1)$, such
 that $\phi^2=-Id + \alpha_1 \otimes Z_1 + \alpha_2 \otimes Z_2$ and $\phi(Z_1)=\phi(Z_2)=0$, where $Z_1$ and $Z_2$
are the Reeb vector fields of the pair. This is a special type of \textit{$f$-structure with complemented frame} (see \cite{blair3, nakagawa, yano}). 

In this paper, we associate to a contact pair structure the almost
complex structures  $J=\phi  - \alpha_2 \otimes Z_1 + \alpha_1
\otimes Z_2$ and $T=\phi  + \alpha_2 \otimes Z_1 - \alpha_1
\otimes Z_2$.
 This can be seen as a generalization of the almost
complex structure used in almost contact geometry to define
normality (see \cite{Blairbook} and the references therein).
Nevertheless our structure is more intrinsic in that, for its
definition, we do not need to consider the manifold $M \times \bR$
as in the case of the almost contact structures. A natural problem
is the study of the integrability condition for these almost
complex structures and we call a contact pair structure
\textit{normal}, if the associated almost complex structures are
both integrable. An interesting feature of this structure is that,
under the assumption that $\phi$ is decomposable, there are almost
contact structures induced on the leaves (which are contact
manifolds) of the characteristic foliations, and then a natural
problem is to relate the normality of the whole structure to that
of the induced structures (in the sense of almost contact
manifolds).

One could expect a general result similar to that of Morimoto
\cite{Mori}, which says that on a product of manifolds, each of
them endowed with an almost contact structures, there is a natural
almost complex structure which is integrable if and only if the
almost contact structures are normal.

In our case this is not true in full generality, since there are interesting
counterexamples showing that the contact pair structure $(\alpha_1, \alpha_2 , \phi)$ on $M$ can be more
complicated: even if $M$ is locally the product of two contact manifolds, the tensor field $\phi$ is not the sum of two tensors on the factors.

Anyway, we can generalize Morimoto's result in the
context of flat bundles, already used in \cite{KM, BK} to construct new examples of symplectic pairs.

By performing the Boothby--Wang fibration over a manifold endowed
with a contact-symplectic pair \cite{Bande2} (which can be thought
as a special almost contact structure), we are able to construct
$\bS^1$-invariant contact pair structures on the total space and
we show that under some hypothesis, the contact pair structure is
normal if and only if the contact-symplectic pair on the base is
normal as almost contact structure. This is an even counterpart of
the constructions given by Morimoto (resp. Hatakeyama) of normal
contact structures on Boothby--Wang fibration over a complex (resp. almost K\"ahler) manifold.

Furthermore, the flat bundles and the Boothby--Wang fibrations
yielding normal contact pairs give new constructions of complex
manifolds.

In the sequel we denote by $\Gamma(B)$ the space of sections of a
vector bundle $B$. For a given foliation $\mathcal{F}$ on a
manifold $M$, we denote by $T\mathcal{F}$ the subbundle of $TM$
whose fibers are given by the distribution tangent to the leaves.
All the differential objects considered are supposed to be smooth.

\section{Preliminaries on contact pairs and contact pair structures}\label{s:prelim}

In this section we firstly give the notions concerning contact pairs which are useful for our purpose, next we recall the definition and the properties of contact pair structures. A manifold endowed with a contact pair was called \textit{bicontact} in \cite{Blair2}. Here we maintain the notations of \cite{Bande1, BH} and we refer to \cite{Bande1, Bande2, BH, BH2, BGK, BK} for further informations and several examples of such structures.
\begin{definition}[\cite{Bande1, BH, Blair2}]\label{d:cpair}
A pair $(\alpha_1, \alpha_2)$ of $1$-forms on a
$(2h+2k+2)$-dimensional manifold $M$ is said to be a contact pair
of type $(h,k)$ if:
\begin{enumerate}
\item[i)] $\alpha_1\wedge (d\alpha_1)^{h}\wedge\alpha_2\wedge
(d\alpha_2)^{k}$ is a volume form, \item[ii)]
$(d\alpha_1)^{h+1}=0$ and $(d\alpha_2)^{k+1}=0$.
\end{enumerate}
\end{definition}
Since the form $\alpha_1$ (resp. $\alpha_2$) has constant class $2h+1$
(resp. $2k+1$), the distribution $\Ker \alpha_1 \cap \Ker d\alpha_1$ (resp.
 $\Ker \alpha_2 \cap \Ker d\alpha_2$) is completely integrable and then it determines the so-called characteristic
  foliation $\mathcal{F}_1$ (resp. $\mathcal{F}_2$) whose leaves are endowed with a contact form induced by $\alpha_2$ (resp. $\alpha_1$).\\

To a contact pair $(\alpha_1, \alpha_2)$ of type $(h,k)$ are
associated two commuting vector fields $Z_1$ and $Z_2$, called \textit{Reeb
vector fields} of the pair, which are uniquely determined by the
following equations:
\begin{eqnarray*}
&\alpha_1 (Z_1)=\alpha_2 (Z_2)=1  , \; \; \alpha_1
(Z_2)=\alpha_2
(Z_1)=0 \, , \\
&i_{Z_1} d\alpha_1 =i_{Z_1} d\alpha_2 =i_{Z_2}d\alpha_1=i_{Z_2}
d\alpha_2=0 \, ,
\end{eqnarray*}
where $i_X$ is the contraction with the vector field $X$.
In particular, since the Reeb vector fields commute, they determine a locally free
$\mathbb{R}^2$-action, called  the \textit{Reeb action}.

The kernel distribution of $d\alpha_1$ (resp. $d\alpha_2$) is
also integrable and then it defines a foliation whose leaves
inherit a contact pair of type $(0,k)$ (resp. $(h,0)$).

The tangent bundle of a manifold $M$ endowed with a contact pair
can be split in different ways. For $i=1,2$, let $T\mathcal F _i$
be the subbundle determined by the characteristic foliation of
$\alpha_i$, $T\mathcal G_i$ the subbundle of $TM$ whose fibers are
given by $\ker d\alpha_i \cap \ker \alpha_1 \cap \ker \alpha_2$
and $\mathbb{R} Z_1, \mathbb{R} Z_2$ the line bundles determined
by the Reeb vector fields. Then:
\begin{align}
\label{split-CP1} TM &=T\mathcal F _1 \oplus T\mathcal F _2  \\
\label{split-CP2} TM &=T\mathcal G_1 \oplus T\mathcal G_2 \oplus \mathbb{R} Z_1 \oplus \mathbb{R} Z_2 .
\end{align}
Moreover we have $T\mathcal F _1=T\mathcal G_1 \oplus \mathbb{R} Z_2 $ and $T\mathcal F _2=T\mathcal G_2 \oplus \mathbb{R} Z_1 $.

In a similar way, we define symplectic pairs and contact-symplectic pairs:
\begin{definition}[\cite{BK}]
A symplectic pair of type $(h, k)$, for $h, k \neq 0$, on a
$2h+2k$-dimensional manifold $M$ is a pair of closed two-forms
$\omega_{1}$, $\omega_{2}$ such that:
\begin{itemize}
\item[i)] $\omega_{1}^h \wedge \omega_{2}^{k}$ is a volume form;
\item[ii)] $\omega_{1}^{h+1}=0$ and $\omega_{2}^{k+1}=0$.
\end{itemize}
\end{definition}
\begin{definition}[\cite{Bande1,Bande2}]
A contact-symplectic pair of type $(h,k)$ on a $(2h+2k+1)$-dimensional manifold $N$ consists of a $1$-form $\beta$ and a closed $2$-form $\eta$ such that:
\begin{enumerate}
\item[i)] $\beta\wedge (d \beta)^{h} \wedge \eta^{k}$ is a volume
form,
\item[ii)] $(d\beta)^{h+1}=0$ and $\eta^{k+1}=0$.
\end{enumerate}
\end{definition}
To a contact-symplectic pair is associated a Reeb vector field
$W$, uniquely defined by the following equations:
\begin{equation}
\beta (W)=1 \, \, , \, \, i_W d\beta= i_W \eta=0 \, .
\end{equation}
Furthermore, let $\FF_1$ and $\FF_2$ be the characteristic foliations of $\eta$ and $\beta$ respectively, and $T\FF_1$, $T\FF_2$
 the corresponding subbundles of $TN$. Let $\bR W$ the line bundle determined by the Reeb vector field and $T\HHH$ the bundle whose fibers
 are given by $\ker \beta \cap \ker \eta$.
 Then we have the following splittings:
\begin{equation}\label{cs-splitting}
TN = T\FF_1 \oplus T\FF_2 = \bR W \oplus T\HHH \oplus T\FF_2 ,
\end{equation}
where $T\FF_1=\bR W \oplus T\HHH$. Moreover the two form $d\beta$ (resp. $\eta$) induces a symplectic form on $T\HHH$ (resp. $T\FF_2$).


\subsection*{The Boothby--Wang construction}\label{s:BW}

The Boothby-Wang fibration \cite{BW},
associates regular contact forms to integral symplectic forms. If
$(M,\omega)$ is a closed symplectic manifold and $\omega$
represents an integral class in $H^{2}(M;\bR)$ then there exists a
principal $\bS^{1}$-bundle $\pi\colon E\rightarrow M$ with Euler
class $[\omega]$ and a connection $1$-form $\alpha$ on it with
curvature $\omega$, i.e.~we have $d\alpha=\pi^{*}\omega$. As
$\omega$ is assumed to be symplectic on $M$, it follows that
$\alpha$ is a contact form on the total space $E$.

If $\omega$ is an arbitrary closed $2$-form representing an
integral cohomology class, we can again find a connection $1$-form
$\alpha$ with curvature $\omega$. If $\omega$ has constant rank
$2k$, then $\alpha$ has constant class $2k+1$, that is
$\alpha\wedge (d\alpha)^{k}\neq 0$, and $(d\alpha)^{k+1}= 0$.

This yields the following results from \cite{BK}:
\begin{theorem}[\cite{BK}]\label{thBWsp}
    Let $M$ be a closed manifold with a symplectic pair
    $(\omega_{1},\omega_{2})$. If $[\omega_{1}]\in H^{2}(M;\bR)$ is an
    integral cohomology class, then the total space of the circle
    bundle $\pi\colon E\rightarrow M$, with Euler class $[\omega_{1}]$,
    carries a natural $\bS^1$-invariant contact-symplectic pair.
    \end{theorem}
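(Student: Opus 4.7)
The plan is to apply the generalized Boothby--Wang construction recalled just before the statement to the first form of the pair, and then show that the pull-back of the second form, together with the connection form obtained, gives a contact--symplectic pair. Since $(\omega_1,\omega_2)$ is a symplectic pair of type $(h,k)$, the form $\omega_1$ has constant rank $2h$; as it is closed and its class is integral, the generalized Boothby--Wang construction produces a principal $\mathbb{S}^1$-bundle $\pi\colon E\to M$ with Euler class $[\omega_1]$ and a connection $1$-form $\alpha$ satisfying $d\alpha=\pi^{*}\omega_1$, with $\alpha$ of constant class $2h+1$.

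My proposed candidate is $\beta:=\alpha$ and $\eta:=\pi^{*}\omega_2$ on $E$. I would then verify the three axioms of a contact--symplectic pair of type $(h,k)$ in turn. First, since $\omega_2$ is closed on $M$, its pull-back $\eta$ is closed on $E$. Second,
\[
(d\beta)^{h+1}=\pi^{*}(\omega_1^{h+1})=0,\qquad \eta^{k+1}=\pi^{*}(\omega_2^{k+1})=0,
\]
because both $\omega_1^{h+1}$ and $\omega_2^{k+1}$ vanish by the definition of a symplectic pair. Third, the volume condition reduces to
\[
\beta\wedge(d\beta)^{h}\wedge\eta^{k}=\alpha\wedge \pi^{*}\!\bigl(\omega_1^{h}\wedge\omega_2^{k}\bigr),
\]
and here the key observation is that $\omega_1^{h}\wedge\omega_2^{k}$ is a volume form on $M$ by definition of a symplectic pair, while $\alpha$ evaluates to $1$ on the fundamental vertical vector field of the $\mathbb{S}^1$-action; hence the wedge is nowhere zero on the total space $E$.

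Finally, the $\mathbb{S}^1$-invariance is essentially automatic: $\alpha$ is invariant because it is a connection $1$-form on a principal $\mathbb{S}^1$-bundle, and $\eta=\pi^{*}\omega_2$ is invariant because the $\mathbb{S}^1$-action preserves the fibers of $\pi$, so $\pi$ itself is invariant. The Reeb vector field $W$ of $(\beta,\eta)$ will then coincide with the infinitesimal generator of the $\mathbb{S}^1$-action, matching the expected equations $\beta(W)=1$ and $i_W d\beta=i_W\eta=0$ (the latter because $d\beta$ and $\eta$ are both pull-backs from $M$ and $W$ is vertical).

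I do not anticipate a serious obstacle here: the argument is a direct verification, once the generalized Boothby--Wang construction is invoked. The only subtle point to articulate cleanly is the rank-versus-class correspondence --- that $\omega_1^{h+1}=0$ translates into $(d\alpha)^{h+1}=0$ on $E$, giving $\alpha$ constant class $2h+1$ rather than making it a contact form in the usual sense --- which is precisely what is needed to land in the contact--symplectic (rather than contact--contact) regime on $E$.
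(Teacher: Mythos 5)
Your proposal is correct and follows exactly the construction the paper indicates just before the statement (and which is the proof in the cited reference \cite{BK}): take the connection form $\alpha$ with $d\alpha=\pi^{*}\omega_1$ as $\beta$, pull back $\omega_2$ to get $\eta$, and verify the axioms of a contact--symplectic pair of type $(h,k)$ together with $\bS^1$-invariance. All the verifications, including the volume-form computation via $\alpha\wedge\pi^{*}(\omega_1^{h}\wedge\omega_2^{k})$ and the rank-versus-class point, are sound.
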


\begin{theorem}[\cite{BK}]\label{thBWcsp}
    Let $M$ be a closed manifold with a contact-symplectic pair
    $(\alpha,\beta)$. If $[\beta]\in H^{2}(M;\bR)$ is an integral
    cohomology class, then the total space of the circle bundle
    $\pi\colon E\rightarrow M$, with Euler class $[\beta]$,
    carries a natural $\bS^1$-invariant contact pair.
    \end{theorem}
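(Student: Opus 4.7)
The plan is to lift the contact-symplectic pair $(\alpha, \beta)$ to a contact pair on $E$ by using the pullback of $\alpha$ together with a connection $1$-form whose curvature is $\beta$. Concretely, since $[\beta] \in H^2(M;\bR)$ is integral, the generalized Boothby--Wang construction recalled just before the statement furnishes a principal $\bS^1$-bundle $\pi \colon E \to M$ together with a connection $1$-form $\eta$ on $E$ satisfying $d\eta = \pi^*\beta$. Because $\beta$ has constant rank $2k$, the form $\eta$ has constant class $2k+1$. On $E$, which has dimension $2h+2k+2$, I would set $\alpha_1 := \pi^*\alpha$ and $\alpha_2 := \eta$ and claim that $(\alpha_1, \alpha_2)$ is a contact pair of type $(h,k)$ in the sense of Definition~\ref{d:cpair}.

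The verification reduces to three checks. The two rank conditions are immediate: $(d\alpha_1)^{h+1} = \pi^*(d\alpha)^{h+1} = 0$ and $(d\alpha_2)^{k+1} = \pi^*\beta^{k+1} = 0$ follow from the corresponding identities on $M$. The essential point is the volume form condition. Expanding,
\[
\alpha_1 \wedge (d\alpha_1)^h \wedge \alpha_2 \wedge (d\alpha_2)^k = \pi^*\bigl(\alpha \wedge (d\alpha)^h \wedge \beta^k\bigr) \wedge \eta.
\]
The pullback factor is a $(2h+2k+1)$-form on $E$ that annihilates any vertical argument; since $\eta$ equals $1$ on the fundamental vector field of the $\bS^1$-action, wedging with $\eta$ converts this pullback of a volume form on $M$ into a volume form on $E$.

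The $\bS^1$-invariance is immediate: $\pi^*\alpha$ is invariant because it is the pullback of a form from the base, while $\eta$ is invariant by the very definition of a principal connection. A direct calculation also identifies the Reeb pair. The fundamental vector field $\xi$ of the fiber action serves as $Z_2$, since $\eta(\xi) = 1$, $\pi^*\alpha(\xi) = 0$, and both $d\alpha_1 = \pi^*d\alpha$ and $d\alpha_2 = \pi^*\beta$ annihilate vertical vectors. If $W$ denotes the Reeb field of the base contact-symplectic pair, then its horizontal lift $\widetilde{W}$ serves as $Z_1$: one has $\pi^*\alpha(\widetilde{W}) = \alpha(W) = 1$, $\eta(\widetilde{W}) = 0$, and $i_{\widetilde{W}} d\alpha_j = \pi^*(i_W d\alpha) = \pi^*(i_W \beta) = 0$. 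The only delicate step in the whole argument is the volume form computation, and even that ultimately amounts to the transversality of $\eta$ to the horizontal distribution determined by the chosen connection.
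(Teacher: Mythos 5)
Your construction is correct and is exactly the one the paper has in mind: the paper does not reprove this statement (it is quoted from \cite{BK}), but the paragraph preceding it recalls precisely this mechanism --- take the connection $1$-form with curvature $\beta$, which has constant class $2k+1$, and pair it with the pullback of $\alpha$ --- and your verification of the volume-form condition, the degeneracy conditions, the $\bS^1$-invariance, and the identification of the Reeb vector fields fills in the details faithfully (your labeling of which form is $\alpha_1$ and which is $\alpha_2$ is swapped relative to the convention used later in Section 6, but that only exchanges the type $(h,k)$ with $(k,h)$ and is immaterial).
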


\begin{corollary}[\cite{BK}]\label{corBWsp}
    If a closed manifold $M$ has a symplectic pair
    $(\omega_{1},\omega_{2})$ such that both $[\omega_{i}]\in
    H^{2}(M;\bR)$ are integral, then the fiber product of the two
    circle bundles with Euler classes equal to $[\omega_{1}]$ and
    $[\omega_{2}]$ respectively carries a natural $\bS^1$-invariant contact pair.
    \end{corollary}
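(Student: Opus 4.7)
The plan is to realize the fiber product as an iterated circle bundle and apply the two Boothby--Wang theorems in succession.

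First, because $[\omega_1]$ is integral, Theorem~\ref{thBWsp} provides a circle bundle $\pi_1\colon E_1\to M$ with Euler class $[\omega_1]$ and a natural $\bS^1$-invariant contact-symplectic pair on $E_1$. Inspection of that construction shows the pair is of the form $(\alpha_1,\pi_1^*\omega_2)$, where $\alpha_1$ is a connection 1-form with $d\alpha_1=\pi_1^*\omega_1$.

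Second, since $[\omega_2]\in H^2(M;\bR)$ is integral, its pullback $\pi_1^*[\omega_2]\in H^2(E_1;\bR)$ is integral as well. I would then apply Theorem~\ref{thBWcsp} to the contact-symplectic pair $(\alpha_1,\pi_1^*\omega_2)$ on $E_1$, using $\pi_1^*[\omega_2]$ as the Euler class. This yields a circle bundle $\pi_2\colon E\to E_1$ together with an $\bS^1$-invariant contact pair $(\pi_2^*\alpha_1,\alpha_2)$, where $\alpha_2$ is a connection 1-form with $d\alpha_2=\pi_2^*\pi_1^*\omega_2$.

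Third, I would identify $E$ with the fiber product $E_1\times_M E_2$, where $E_2\to M$ denotes the circle bundle with Euler class $[\omega_2]$. This is the observation that the pullback $\pi_1^*E_2\to E_1$ is a circle bundle whose Euler class is $\pi_1^*[\omega_2]$, hence $\bS^1$-isomorphic to $E\to E_1$, and its total space is tautologically $E_1\times_M E_2$.

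The main obstacle, such as it is, consists only in checking that $(\pi_2^*\alpha_1,\alpha_2)$ really is a contact pair of type $(h,k)$ on $E$; this is essentially free. Via $d(\pi_2^*\alpha_1)=\pi_2^*\pi_1^*\omega_1$ and $d\alpha_2=\pi_2^*\pi_1^*\omega_2$, the volume-form condition reduces to $\omega_1^h\wedge\omega_2^k$ being a volume form on $M$, while $(d\pi_2^*\alpha_1)^{h+1}=0$ and $(d\alpha_2)^{k+1}=0$ are inherited directly from the corresponding identities for the symplectic pair. The $\bS^1$-invariance comes from the fiberwise circle action supplied by Theorem~\ref{thBWcsp}.
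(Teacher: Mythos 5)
Your proposal is correct and follows exactly the route the paper indicates: the corollary is obtained by a double Boothby--Wang fibration, applying Theorem~\ref{thBWsp} to get the contact-symplectic pair $(\alpha_1,\pi_1^*\omega_2)$ on $E_1$ and then Theorem~\ref{thBWcsp} over $E_1$, with the resulting total space identified with the fiber product via the pullback bundle $\pi_1^*E_2$. The paper itself only sketches this (citing \cite{BK}), and your explicit verification of the type-$(h,k)$ conditions is a sound filling-in of the details.
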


In particular the Corollary \ref{corBWsp} affirms that starting
from a symplectic pair whose two forms represent integral classes,
then performing a double Boothby-Wang fibration, one obtains a
contact pair on the top.

\subsection{Almost contact structures}
An almost contact structure on a manifold $M$ is a triple $(\alpha, Z, \phi)$ of a
one-form $\alpha$, a vector field $Z$ and a field of endomorphisms $\phi$ of the tangent bundle of $M$,
 such that $\phi ^2 =-Id + \alpha \otimes Z$, $\phi(Z)=0$ and $\alpha(Z)=1$. In particular, it follows that $\alpha \circ \phi=0$ and that
 the rank of $\phi$ is $\dim M -1$.

If a manifold $M$ carries such a structure, one can consider an almost complex structure on $M \times \bR$. Every $Y \in \Gamma (T(M \times\bR))$
 can be written as $X + f \frac{d}{dt}$ for $X$ tangent to $M$ and
$f \in C^\infty (M \times \bR)$. Then the almost complex structure is defined as follows:
$$
J(X + f \frac{d}{dt})= \phi X -f Z + \alpha (X) \frac{d}{dt} .
$$

The almost contact structure is said to be normal if $J$ is
integrable. The integrability condition for $J$ is equivalent to
the following condition:
\begin{equation}
[\phi, \phi](X,Y)+2 d\alpha (X,Y) Z=0 , \, \forall X, Y \in \Gamma (TM) ,
\end{equation}
where $[\phi, \phi]$ is the Nijenhuis tensor of $\phi$.\\
If $\alpha$ is a contact form and $(\alpha, Z, \phi)$ an almost
contact structure, we often refer to it as a \textit{contact form with structure
tensor} $\phi$. When the structure is normal we call it
\textit{normal contact form} for short.

\subsection{Contact-symplectic pairs as almost contact structures}\label{subsec:alm-cont-sympl-str}
A contact-symplectic pair $(\beta , \eta)$ on a manifold $N$ can be viewed
as a special almost contact structure (in \cite{Blair} D. Blair
considered similar structures) when it is endowed with an endomorphism $\psi$ of $TN$, satisfying
\begin{equation}\label{tensopsi}
\psi^2 = -Id + \beta \otimes W \, ,
\end{equation}
where $W$ is the Reeb vector field of $(\beta , \eta)$. Such a $\psi$ always exists
because on the kernel of $\beta$ the $2$-form $d\beta+\eta$ is
symplectic. By a standard polarization process, one can always
construct such a $\psi$ and an associated metric $g$, that is a metric satisfying the following conditions:
$$
g(X, \psi Y)= (d\beta + \eta)(X,Y) \, \text{and} \, g(X, W)= \beta (X) , \, \forall X, Y \in \Gamma (TN) .
$$
Since the symplectic subbundle determined by the kernel of $\beta$ can be split into two symplectic
subbundles $T\HHH$ and $T\FF_2$ as in \eqref{cs-splitting}, by polarization on both of them one can always construct a so called \textit{decomposable}
endomorphism $\psi$ which preserves the tangent spaces of the foliations (or equivalently $\psi (T\HHH )= T\HHH$ and $\psi (T\FF_2)= T\FF_2$) and an
associated metric $g$ for which the foliations are orthogonal with respect to $g$. We do not give the details for that, since we have proven the analog
of this statement for contact pair structures in \cite{BH2}.
\begin{definition}
An \textit{almost contact-symplectic structure} on a manifold $M$ is a triple $(\beta, \eta , \psi)$, where $(\beta, \eta)$ is a
contact-symplectic pair with Reeb vector filed $W$ and $\psi$ is an endomorphism of $TM$ satisfying \eqref{tensopsi}.
\end{definition}

\subsection{Contact pair structures}

This notion has been considered in \cite{BH2}. We recall here the definition and some basic properties which are useful in the sequel.
\begin{definition}[\cite{BH2}]
A \emph{contact pair structure} on a manifold $M$ is a triple
$(\alpha_1 , \alpha_2 , \phi)$, where $(\alpha_1 , \alpha_2)$ is a
contact pair and $\phi$ a tensor field of type
$(1,1)$ such that:
\begin{equation}\label{d:cpstructure}
\phi^2=-Id + \alpha_1 \otimes Z_1 + \alpha_2 \otimes Z_2 \, \, \text{and} \, \, \phi(Z_1)=\phi(Z_2)=0
\end{equation}
where $Z_1$ and $Z_2$ are the Reeb vector fields of $(\alpha_1 , \alpha_2)$.
\end{definition}
Moreover we have $\alpha_i \circ \phi =0$, $i=1,2$ and the rank of $\phi$ is equal to $\dim M -2$.
Recall that on a manifold $M$ endowed with a contact pair, there always exists an endomorphisms $\phi$ verifying \eqref{d:cpstructure}. Moreover, $\phi$
 can be chosen to be decomposable (\cite{BH2}, Proposition 5), that is:

\begin{definition}[\cite{BH2}]
The endomorphism $\phi$ is said to be \textit{decomposable} if
$\phi (T\mathcal{F}_i) \subset T\mathcal{F}_i$, for $i=1,2$.
\end{definition}
The condition for $\phi$ to be decomposable is equivalent to $\phi (T\mathcal{G}_i)= T\mathcal{G}_i$, $i=1,2$.

The following results are concerned with the structures induced on the leaves of the characteristic foliations:
\begin{proposition}[\cite{BH2}]\label{prop:induced C structures}
If $\phi$ is decomposable, then $(\alpha_1 , Z_1 ,\phi)$ (resp. $(\alpha_2 , Z_2 ,\phi)$) induces a contact form with structure tensor the restriction of
$\phi$ on the leaves of $\mathcal{F}_2$ (resp.$\mathcal{F}_1$).
\end{proposition}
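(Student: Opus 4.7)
The plan is to verify, leaf by leaf, the three axioms of an almost contact structure, using only the defining properties of the contact pair, of the Reeb vector fields, and of a decomposable $\phi$. By symmetry it is enough to treat the case of $(\alpha_1,Z_1,\phi)$ restricted to a leaf $L$ of $\mathcal{F}_2$, and I will write $\iota\colon L\hookrightarrow M$ for the inclusion.

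First I would check that the data actually restrict to $L$. Since $\alpha_2(Z_1)=0$ and $i_{Z_1}d\alpha_2=0$, the vector field $Z_1$ is a section of $\ker\alpha_2\cap\ker d\alpha_2 = T\mathcal{F}_2$, so $Z_1$ is tangent to $L$. By the assumption that $\phi$ is decomposable, $\phi(T\mathcal{F}_2)\subset T\mathcal{F}_2$, so $\phi$ pushes forward to a well-defined endomorphism $\phi_L$ of $TL$. The form $\iota^*\alpha_1$ is defined without any extra hypothesis.

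Next I would verify that $\iota^*\alpha_1$ is a contact form on $L$ with Reeb field $Z_1|_L$ and structure tensor $\phi_L$. That $\iota^*\alpha_1$ is contact is exactly the content of Definition~\ref{d:cpair} (the form $\alpha_1$ induces a contact form on the leaves of the characteristic foliation of $\alpha_2$). For the Reeb condition, $(\iota^*\alpha_1)(Z_1)=\alpha_1(Z_1)=1$, and $i_{Z_1}d(\iota^*\alpha_1)=\iota^*(i_{Z_1}d\alpha_1)=0$, so $Z_1|_L$ is indeed the Reeb field. Then I would compute, for a vector $X$ tangent to $L$,
\begin{equation*}
\phi_L^2(X)=\phi^2(X)=-X+\alpha_1(X)Z_1+\alpha_2(X)Z_2=-X+(\iota^*\alpha_1)(X)\,Z_1,
\end{equation*}
the last equality because $\iota^*\alpha_2=0$ on $L$. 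Together with $\phi_L(Z_1)=\phi(Z_1)=0$ and the relation $\alpha_1(Z_1)=1$ just checked, this is precisely the almost contact identity on $L$.

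The argument for $(\alpha_2,Z_2,\phi)$ restricted to leaves of $\mathcal{F}_1$ is completely symmetric. There is no real obstacle here: the only non-routine point is realizing that decomposability of $\phi$ is exactly the hypothesis that allows one to restrict $\phi$ to each characteristic foliation, while every other identity follows by pulling back the global contact pair relations through the inclusion of a leaf, using the vanishing of $\iota^*\alpha_2$ on leaves of $\mathcal{F}_2$ (resp. of $\iota^*\alpha_1$ on leaves of $\mathcal{F}_1$).
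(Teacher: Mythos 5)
Your verification is correct and complete: tangency of $Z_1$ to the leaves of $\mathcal{F}_2$, restriction of $\phi$ via decomposability, and the almost contact identity via $\iota^*\alpha_2=0$ are exactly the points that need checking. The paper itself gives no proof of this proposition (it is quoted from \cite{BH2}), and your argument is the standard direct verification one would find there, so there is nothing to add.
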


\section{Almost complex structures}\label{s4}
To define a normal almost contact structure on a manifold $M$, one needs to
consider an almost complex structure on $M \times\bR$. In the case of a contact pair structure the almost
complex structure can be defined in a more natural and intrinsic way on the manifold.

\begin{definition}
Let $(\alpha_1 , \alpha_2 , \phi)$ be a contact pair
structure on a manifold $M$ and $Z_1$, $Z_2$ the Reeb vector fields of the
pair. The almost complex structure on $M$
\begin{equation}\label{almostcomplexJ}
J=\phi  - \alpha_2 \otimes Z_1 + \alpha_1 \otimes Z_2 ,
\end{equation}
is called the \textit{almost complex structure
associated} to $(\alpha_1 , \alpha_2 , \phi)$.
\end{definition}
We can also consider a second almost complex structure
\begin{equation}\label{almostcomplexT}
T=\phi  + \alpha_2 \otimes Z_1 - \alpha_1 \otimes Z_2 ,
\end{equation}
which is nothing but the almost complex structure associated to
the contact pair $(\alpha_2, \alpha_1, \phi)$ and commutes with
$J$.
\begin{remark}
The almost complex structure induced by $T$ on $T\mathcal G_1 \oplus T\mathcal G_2 $ is the same as $J$,
 but opposite to it on the subbundle $\bR Z_1 \oplus \bR Z_2$. Then the orientations induced by
$J$ and $T$ are opposite. In general one can not expect that both structures are integrable since this imposes some topological
obstructions, in particular on a four dimensional closed manifold (see \cite{Ko}).
\end{remark}

Recalling that the forms $\alpha_1, \alpha_2$ are invariant by the
Reeb vector fields, a straightforward calculation shows that the
Nijenhuis tensor of the almost complex structure $J$ associated to the
contact pair structure $(\alpha_1, \alpha_2, \phi)$ is given by:
\begin{equation}
\begin{split}
\label{intJ}
N_J (X,Y)= &[\phi, \phi](X,Y) +2 d\alpha_1 (X,Y) Z_1 +2
d\alpha_2 (X,Y) Z_2+\alpha_1 (X) [L_{Z_2} \phi] (Y)\\
& - \alpha_1 (Y) [L_{Z_2} \phi]
(X)+ \alpha_2 (Y) [L_{Z_1} \phi] (X)-\alpha_2 (X) [L_{Z_1} \phi
](Y)\\
&+ [(L_{\phi X} \alpha_1) (Y)-(L_{\phi Y} \alpha_1) (X)] Z_2
+[(L_{\phi Y} \alpha_2) (X)-(L_{\phi X} \alpha_2) (Y)] Z_1 \, ,
\end{split}
\end{equation}
for each $X , Y \in \Gamma (TM)$, where $L_X$ is the Lie derivative along $X$, $[\phi, \phi]$ is
the Nijenhuis tensor of $\phi$.

The Nijenhuis tensor $N_T$ of the almost complex structure defined in \eqref{almostcomplexT}, is obtained from $N_J$
by interchanging the role of the forms $\alpha_1$, $\alpha_2$ and their Reeb vector fields. Then, for each $X , Y \in \Gamma (TM)$ we have:
\begin{equation}
\begin{split}
\label{intT}N_T (X,Y)=& [\phi, \phi](X,Y) +2 d\alpha_1 (X,Y) Z_1 +2
d\alpha_2 (X,Y) Z_2-\alpha_1 (X) [L_{Z_2} \phi] (Y)\\
& + \alpha_1 (Y) [L_{Z_2} \phi]
(X)-\alpha_2 (Y) [L_{Z_1} \phi] (X)+\alpha_2 (X) [L_{Z_1} \phi
](Y)\\
&-[(L_{\phi X} \alpha_1) (Y)-(L_{\phi Y} \alpha_1) (X)] Z_2
-[(L_{\phi Y} \alpha_2) (X)-(L_{\phi X} \alpha_2) (Y)] Z_1 \, .
\end{split}
\end{equation}

The vanishing of both $N_J$ and $N_T$ is equivalent to the
vanishing of their sum and their difference. Since $[L_{Z_i}\phi]
(X)$ is in the kernel of $\alpha_1$ and $\alpha_2$ for every $X \in \Gamma (TM)$,
the integrability of both $J$ and $T$ is equivalent to the
following system:
\begin{equation}
\left\{
\begin{aligned}
&[\phi , \phi ](X,Y)+2 d\alpha_1 (X,Y) Z_1 +2 d\alpha_2 (X,Y) Z_2=0 \label{eq1}\\
&-\alpha_1 (X) [L_{Z_2} \phi] (Y) + \alpha_1 (Y) [L_{Z_2} \phi]
(X)-\alpha_2 (Y) [L_{Z_1} \phi] (X)+\alpha_2 (X) [L_{Z_1} \phi ](Y) =0\\
&[(L_{\phi X} \alpha_1) (Y)-(L_{\phi Y} \alpha_1) (X)] Z_2=0\\
&[(L_{\phi Y} \alpha_2) (X)-(L_{\phi X} \alpha_2) (Y)] Z_1 =0 ,
\end{aligned}
\right.
\end{equation}
for every  $X , Y \in \Gamma (TM)$.
Now, putting $Y=Z_i$ in the first equation, one obtains
$L_{Z_i}\phi=0$, which implies the second equation. Applying
$\alpha_i$ to $N_J(\phi X, Y)$ gives the last equations.

These observations yield the following theorem:
\begin{theorem}\label{intJT}
The integrability of both $J$ and $T$ is equivalent to the
following equation:
\begin{equation}\label{eq:intJT}
[\phi, \phi](X,Y) +2 d\alpha_1 (X,Y) Z_1 +2 d\alpha_2 (X,Y) Z_2=0
\, \, \forall X , Y \in \Gamma(TM) \, .
\end{equation}
\end{theorem}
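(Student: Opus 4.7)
The direction ``integrability of $J$ and $T$ implies \eqref{eq:intJT}'' is immediate, since \eqref{eq:intJT} is the first equation of the four-equation system derived just above the theorem. The substance of the proof lies in the converse: assuming \eqref{eq:intJT}, I must derive the remaining three equations of that system.

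First I would specialise \eqref{eq:intJT} at $Y=Z_j$. Because $\phi Z_j=0$ and $i_{Z_j}d\alpha_k=0$ for all $j,k$, the equation collapses to $\phi^2[X,Z_j]-\phi[\phi X,Z_j]=0$. Using $(L_{Z_j}\phi)(X)=[Z_j,\phi X]-\phi[Z_j,X]$ and applying $\phi$ to the collapsed identity gives $\phi\bigl((L_{Z_j}\phi)(X)\bigr)=0$, so $(L_{Z_j}\phi)(X)\in\ker\phi=\bR Z_1\oplus\bR Z_2$. On the other hand the derivation identity $\alpha_k\circ(L_{Z_j}\phi)=L_{Z_j}(\alpha_k\circ\phi)-(L_{Z_j}\alpha_k)\circ\phi$ yields $\alpha_k\circ(L_{Z_j}\phi)=0$, because $\alpha_k\circ\phi=0$ and $L_{Z_j}\alpha_k=i_{Z_j}d\alpha_k+d(\alpha_k(Z_j))=0$. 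Since $(\bR Z_1\oplus\bR Z_2)\cap(\ker\alpha_1\cap\ker\alpha_2)=\{0\}$, this forces $L_{Z_j}\phi=0$, which is the second equation of the system.

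For the remaining two equations I would apply $\alpha_i$ to \eqref{eq:intJT} with $X$ replaced by $\phi X$. In the expansion of $[\phi,\phi](\phi X,Y)$, three of the four terms lie in the image of $\phi$ or $\phi^2$ and so are annihilated by $\alpha_i$ (since $\alpha_i\circ\phi=0$ and hence $\alpha_i\circ\phi^2=0$); only $\alpha_i\bigl([\phi^2 X,\phi Y]\bigr)$ survives. Substituting $\phi^2 X=-X+\alpha_1(X)Z_1+\alpha_2(X)Z_2$ and expanding via $[fV,W]=f[V,W]-W(f)V$, the brackets with $Z_j$ vanish under $\alpha_k$ (because $\alpha_k([Z_j,\phi Y])=Z_j(\alpha_k(\phi Y))=0$), leaving $-\alpha_i([X,\phi Y])-(\phi Y)(\alpha_i(X))$, which is exactly the Cartan expression for $d\alpha_i(X,\phi Y)$. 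Matching this (up to the normalisation implicit in the paper's $[\phi,\phi]$) with the contribution $2d\alpha_i(\phi X,Y)$ from the $Z_i$-term of \eqref{eq:intJT} at $(\phi X,Y)$ produces $d\alpha_i(\phi X,Y)+d\alpha_i(X,\phi Y)=0$ for $i=1,2$. Since $\alpha_i(\phi V)=0$ gives $(L_{\phi V}\alpha_i)(W)=d\alpha_i(\phi V,W)$, these relations are precisely the third and fourth equations of the system.

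The delicate point is the Leibniz bookkeeping in $\alpha_i\bigl([\phi^2 X,\phi Y]\bigr)$: the derivative terms $(\phi Y)(\alpha_i(X))$ produced by the function coefficients in $\phi^2 X$ must be recognised, together with $-\alpha_i([X,\phi Y])$, as reassembling into the Cartan formula for $d\alpha_i(X,\phi Y)$. Everything else is a routine consequence of the defining identities of a contact pair structure, together with $L_{Z_j}\alpha_k=0$ for the Reeb fields; once the three implied equations are in hand, the criterion recalled just before the theorem delivers the integrability of both $J$ and $T$.
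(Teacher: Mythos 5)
Your argument is correct and takes essentially the same route as the paper's own (very terse) proof: the paper likewise reduces to the four-equation system stated just above the theorem, obtains $L_{Z_i}\phi=0$ by putting $Y=Z_i$ in \eqref{eq:intJT}, and derives the last two equations by applying $\alpha_i$ to the tensor evaluated at $(\phi X, Y)$. Your write-up merely supplies the Leibniz/Cartan bookkeeping that the paper leaves implicit.
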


By using the splitting \eqref{split-CP1}, the equation \eqref{eq:intJT} is equivalent to the following system:
\begin{align}
\label{intJTbis1} &[\phi, \phi](X,Y) +2 d\alpha_1 (X,Y) Z_1=0 \, \,
\forall X , Y \in \Gamma(T\mathcal{F}_2) \, \, ,\\
\label{intJTbis2} &[\phi, \phi](X,Y) +2 d\alpha_2 (X,Y)
Z_2=0 \, \, \forall X , Y \in \Gamma(T\mathcal{F}_1) \, \, , \\
\label{intJTbis3} & [\phi, \phi](X,Y)=0 \, \, \forall X \in
\Gamma(T\mathcal{F}_1) \, , \, \forall Y \in \Gamma(T\mathcal{F}_2)  .
\end{align}

In analogy with the case of the almost contact structures we give
the following definition:
\begin{definition}
A contact pair structure $(\alpha_1, \alpha_2, \phi)$ on a
manifold $M$ is said to be a \textit{normal contact pair} if the
Nijenhuis tensors $N_J$ and $N_T$ vanish identically.
\end{definition}
The equation \eqref{eq:intJT} states exactly the normality of the contact pair structure.


\subsection{ Decomposable $\phi$ and induced contact structures}
In this case we already remarked that the contact pair structure
induces contact forms with structure tensor $\phi$, on the leaves
of the characteristic foliations $\FF_1$ and $\FF_2$ (Proposition \ref{prop:induced C structures}). Applying Theorem \ref{intJT}, we
have:

\begin{corollary}\label{corinducedstr1}
Let $(\alpha_1, \alpha_2, \phi)$ be a contact pair with decomposable $\phi$. The structure is normal if and only if the induced structures are normal and
\eqref{intJTbis3} is satisfied.
\end{corollary}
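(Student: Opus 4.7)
The plan is to combine Theorem \ref{intJT} with the tangent bundle splitting $TM = T\FF_1 \oplus T\FF_2$ already used to derive \eqref{intJTbis1}--\eqref{intJTbis3}, and then match each of the two ``pure'' equations with the normality condition for the corresponding induced almost contact structure supplied by Proposition \ref{prop:induced C structures}.

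First, by Theorem \ref{intJT}, the pair $(\alpha_1,\alpha_2,\phi)$ is normal if and only if \eqref{eq:intJT} holds, and this in turn is equivalent to the conjunction of the three equations \eqref{intJTbis1}, \eqref{intJTbis2}, \eqref{intJTbis3}, obtained by testing \eqref{eq:intJT} on pairs of sections of $T\FF_1$, of $T\FF_2$, and of one of each. So everything reduces to showing that \eqref{intJTbis1} is exactly the normality of the structure induced on the leaves of $\FF_2$, and \eqref{intJTbis2} the normality of the structure induced on the leaves of $\FF_1$.

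I would verify this by a direct identification. By Proposition \ref{prop:induced C structures}, on each leaf of $\FF_2$ the triple $(\alpha_1,Z_1,\phi)$ restricts to an almost contact structure with contact form $\alpha_1$ and Reeb vector field $Z_1$, and its normality condition is precisely $[\phi,\phi](X,Y)+2d\alpha_1(X,Y)Z_1=0$ for $X,Y$ tangent to the leaf. The key point is that the Nijenhuis tensor of the restricted endomorphism agrees with the restriction of the ambient $[\phi,\phi]$: since $\phi$ is decomposable, $\phi(T\FF_2)\subset T\FF_2$, and since $\FF_2$ is integrable, $[X,Y]\in\Gamma(T\FF_2)$ for $X,Y\in\Gamma(T\FF_2)$; hence every term in
\[
[\phi,\phi](X,Y)=\phi^2[X,Y]+[\phi X,\phi Y]-\phi[\phi X,Y]-\phi[X,\phi Y]
\]
may be computed leafwise. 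Likewise $d\alpha_1$ and $Z_1\in\Gamma(T\FF_2)$ make sense on the leaf. The symmetric argument on the leaves of $\FF_1$ identifies \eqref{intJTbis2} with the normality of $(\alpha_2,Z_2,\phi)$ there. The remaining condition \eqref{intJTbis3} is intrinsically mixed and cannot be absorbed into the leafwise data, so it must be retained as an extra hypothesis. Putting the three equivalences together yields the corollary.

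There is essentially no obstacle beyond the bookkeeping above; the only point that requires care is the identification of the leafwise Nijenhuis tensor with the restriction of the ambient one, which is why decomposability of $\phi$ is needed in the hypothesis.
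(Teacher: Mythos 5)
Your proposal is correct and follows essentially the same route as the paper: the paper's proof is the single observation that, when $\phi$ is decomposable, \eqref{intJTbis1} and \eqref{intJTbis2} are equivalent to the normality of the induced structures, which is exactly the identification you carry out. The extra detail you supply (that decomposability and the integrability of the characteristic foliations let one compute $[\phi,\phi]$, $d\alpha_i$ and $Z_i$ leafwise) is the justification the paper leaves implicit.
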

\begin{proof}
When $\phi$ is decomposable, \eqref{intJTbis1} and \eqref{intJTbis2} are equivalent to the normality of the induced structures.
\end{proof}
A partial converse of this corollary is the following:
\begin{corollary}\label{corinducedstr2}
If $\phi$ is decomposable and both characteristic foliations are
normal for the induced structures, then $J$ is integrable if and
only if $T$ is integrable.
\end{corollary}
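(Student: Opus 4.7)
The plan is to decompose $N_J$ and $N_T$ according to the splitting $TM = T\FF_1 \oplus T\FF_2$ from \eqref{split-CP1} and compare the two Nijenhuis tensors on each of the three types of argument pairs. Because $N_J$ and $N_T$ are tensors, their vanishing can be checked block by block.

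Before the case analysis I will unpack the induced-normality hypothesis. By decomposability of $\phi$, the triple $(\alpha_1, Z_1, \phi)$ restricts to an almost contact structure on each leaf of $\FF_2$, and the classical almost contact theory says that the single normality condition $[\phi,\phi](X,Y) + 2 d\alpha_1(X,Y) Z_1 = 0$ on $T\FF_2$ automatically forces the auxiliary identities $L_{Z_1}\phi = 0$ on $T\FF_2$ and $(L_{\phi X}\alpha_1)(Y) = (L_{\phi Y}\alpha_1)(X)$ for $X, Y \in \Gamma(T\FF_2)$. A symmetric package of three identities holds on $T\FF_1$ for $\alpha_2$ and $Z_2$. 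Combined with the inclusions $T\FF_i \subset \ker \alpha_i \cap \ker d\alpha_i$, this will kill most of the Lie-derivative summands in \eqref{intJ}--\eqref{intT} on the pairs arising in each block.

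In the mixed block $X \in \Gamma(T\FF_1)$, $Y \in \Gamma(T\FF_2)$, a direct substitution shows $\alpha_1(X) = \alpha_2(Y) = 0$, $d\alpha_i(X,Y) = 0$, and each of the four $L_{Z_i}\phi$ and four $L_{\phi \cdot}\alpha_\cdot$ contributions vanishes (the former by induced normality, the latter by decomposability combined with $T\FF_i \subset \ker d\alpha_i$). What remains in both \eqref{intJ} and \eqref{intT} is $[\phi,\phi](X,Y)$, so $N_J(X,Y) = N_T(X,Y)$ on this block. In the diagonal block $X, Y \in \Gamma(T\FF_1)$, the sum $[\phi,\phi](X,Y) + 2 d\alpha_2(X,Y) Z_2$ vanishes by induced normality on $\FF_1$, the $(L_{\phi \cdot}\alpha_1)$-type pieces vanish by decomposability, and the $(L_{\phi \cdot}\alpha_2)$-type pieces vanish by the $N^{(2)}$-identity; what survives reduces $N_J$ to $\alpha_2(Y)(L_{Z_1}\phi)(X) - \alpha_2(X)(L_{Z_1}\phi)(Y)$, while the same simplification of \eqref{intT} produces exactly its negative. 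Hence $N_J = -N_T$ on $T\FF_1 \times T\FF_1$, and the mirror argument gives $N_J = -N_T$ on $T\FF_2 \times T\FF_2$. On each of the three blocks $N_J$ vanishes precisely when $N_T$ does, and this assembles to the global equivalence.

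The main obstacle is organizational rather than conceptual: one has to verify, term by term, that each of the several Lie-derivative summands in \eqref{intJ} and \eqref{intT} either cancels or vanishes on the block under consideration. The step most easily overlooked is the $N^{(2)}$-type consequence of induced normality, namely $(L_{\phi X}\alpha_j)(Y) = (L_{\phi Y}\alpha_j)(X)$ for arguments tangent to the appropriate leaf, since this is exactly what is needed on the diagonal blocks for the remaining terms to reduce to a clean sign flip on the $L_{Z_\cdot}\phi$ pieces.
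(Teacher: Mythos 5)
Your argument is correct, and it takes a genuinely different route from the paper's. The paper's proof assumes $N_J=0$ and, combining the standard consequences of normality of the induced structures (namely $[L_{Z_1}\phi]=0$ on $T\FF_2$ and $[L_{Z_2}\phi]=0$ on $T\FF_1$) with the extra equations $N_J(X,Z_i)=0$, first upgrades to $L_{Z_i}\phi=0$ on all of $TM$; this kills the mixed-block terms, so $0=N_J(X,Y)=[\phi,\phi](X,Y)$ for $X,Y$ tangent to different foliations, and the whole system \eqref{intJTbis1}--\eqref{intJTbis3} holds, whence both $J$ and $T$ are integrable by Theorem \ref{intJT}. You never assume integrability of either structure: you show, from induced normality alone, that blockwise with respect to \eqref{split-CP1} one has $N_T=N_J$ on the mixed block and $N_T=-N_J$ on each diagonal block, so the two tensors have the same zero set. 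This is a more symmetric argument and avoids the detour through Theorem \ref{intJT} and the computation of $N_J(X,Z_i)$; the trade-off is that the paper's route yields as a by-product the stronger conclusion that integrability of $J$ alone forces $L_{Z_i}\phi=0$ and full normality, information that is reused in Proposition \ref{JTinth0} and Theorem \ref{fifi}. One small simplification of your diagonal-block step: you do not actually need the identity $(L_{\phi X}\alpha_j)(Y)=(L_{\phi Y}\alpha_j)(X)$ there, since the $(L_{\phi\,\cdot}\,\alpha_j)$-summands already change sign between \eqref{intJ} and \eqref{intT}; the only thing that must vanish is the sign-invariant part $[\phi,\phi](X,Y)+2d\alpha_1(X,Y)Z_1+2d\alpha_2(X,Y)Z_2$, and on a diagonal block that is exactly the induced normality condition.
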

\begin{proof}
Let us suppose that $J$ is integrable. We want to prove that \eqref{intJTbis1}, \eqref{intJTbis2} and \eqref{intJTbis3} are
satisfied. The first two equations are a consequence of the normality
of the induced structures. Moreover, this implies
\begin{eqnarray}
\label{eq:1:intJT}&[L_{Z_1}\phi] (X) =0 \, \, \forall X \in \Gamma(T\mathcal{F}_2) \; ,\\
\label{eq:2:intJT}&[L_{Z_2}\phi] (X) =0 \, \, \forall X \in \Gamma(T\mathcal{F}_1) \, .
\end{eqnarray}
Because $N_J$ vanishes, for $i=1,2$ we have $N_J(X, Z_i)=0$ for every
$X$. Combining this with \eqref{eq:1:intJT} and \eqref{eq:2:intJT}, we obtain
$L_{Z_i}\phi=0$. This implies that for $X,Y$ tangent to different
foliations
$$
0=N_J (X, Y)=[\phi, \phi](X,Y),
$$
which gives
\eqref{intJTbis3}. We argue similarly for $T$ and this completes the
proof.
\end{proof}
An immediate consequence is the Theorem of Morimoto  for a product of
contact manifolds (see \cite{Mori}). If $J$ and $T$ are the almost complex structures defined in \eqref{almostcomplexJ} and \eqref{almostcomplexT} respectively, then we have:
\begin{corollary}[\cite{Mori}]\label{corolmorimoto}
Suppose that $(M_1, \alpha_1 , \phi_1)$ and $(M_2,
\alpha_2 , \phi_2)$ are contact manifolds with structure tensor $\phi_1$ and $\phi_2$ respectively. Then the contact
pair structure $(\alpha_1, \alpha_2, \phi_1 \oplus \phi_2)$ on
$M_1 \times M_2$ is normal if and only if $( \alpha_1 , \phi_1)$ and $(
\alpha_2 , \phi_2)$ are normal as almost contact structures.
\end{corollary}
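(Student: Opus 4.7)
The plan is to deduce the corollary directly from Corollary \ref{corinducedstr1}, using the fact that a product structure trivially satisfies the cross-foliation condition \eqref{intJTbis3}.

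First I would verify that $(\alpha_1,\alpha_2,\phi_1\oplus\phi_2)$, where by abuse of notation $\alpha_i$ and $\phi_i$ denote the pullbacks to $M_1\times M_2$, really is a contact pair structure: the two forms have complementary classes because $\alpha_i$ is contact on $M_i$, the Reeb vector fields of the pair are precisely $Z_1,Z_2$ viewed as vector fields tangent to the first and second factor respectively, and the defining relations \eqref{d:cpstructure} are immediate from the corresponding relations $\phi_i^2=-Id+\alpha_i\otimes Z_i$ on each factor. Moreover $\phi=\phi_1\oplus\phi_2$ is decomposable: the characteristic foliations are the obvious product foliations $\mathcal{F}_1=\{\text{pt}\}\times M_2$ and $\mathcal{F}_2=M_1\times\{\text{pt}\}$, and $\phi$ preserves each $T\mathcal{F}_i$ because $\phi_j$ acts trivially on the other factor.

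Next I would identify the induced structures on the leaves. A leaf of $\mathcal{F}_2$ is diffeomorphic to $M_1$, and the restriction of $\alpha_1$ and of $\phi_1\oplus\phi_2$ to this leaf is exactly $(\alpha_1,Z_1,\phi_1)$; symmetrically for $\mathcal{F}_1$. Thus by Corollary \ref{corinducedstr1}, normality of the contact pair structure on $M_1\times M_2$ is equivalent to normality of $(\alpha_i,Z_i,\phi_i)$ on each $M_i$ together with the cross-condition \eqref{intJTbis3}.

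The remaining step, which is the only one requiring verification, is that \eqref{intJTbis3} holds automatically for the product. If $X\in\Gamma(T\mathcal{F}_1)$ is tangent to the $M_2$-factor and $Y\in\Gamma(T\mathcal{F}_2)$ is tangent to the $M_1$-factor, then $[X,Y]=0$ since the factors commute; moreover $\phi X=\phi_2 X$ remains tangent to $M_2$ and $\phi Y=\phi_1 Y$ remains tangent to $M_1$, so $[\phi X,\phi Y]=[\phi X,Y]=[X,\phi Y]=0$ as well. Expanding
\[
[\phi,\phi](X,Y)=\phi^2[X,Y]+[\phi X,\phi Y]-\phi[\phi X,Y]-\phi[X,\phi Y]
\]
one reads off $[\phi,\phi](X,Y)=0$, which is precisely \eqref{intJTbis3}. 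There is no real obstacle here; the content of the argument is that the geometric ``separation of factors'' in a product kills the only condition in Corollary \ref{corinducedstr1} beyond the normality of the induced structures, which is exactly the reason Morimoto's theorem holds for products but may fail for more general total spaces (as emphasized in the introduction).
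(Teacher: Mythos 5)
Your argument is correct and follows essentially the same route as the paper: decomposability of $\phi_1\oplus\phi_2$, reduction to Corollary \ref{corinducedstr1}, and the observation that \eqref{intJTbis3} holds because lifted vector fields from the two factors (which span, and suffice since $[\phi,\phi]$ is tensorial) commute. The paper states the vanishing of the cross-term more tersely, but the content is identical.
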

\begin{proof}
It is clear that in this case $\phi$ is decomposable. If the
almost contact structures on $M_1$ and $M_2$ are normal, then
\eqref{intJTbis1} and \eqref{intJTbis2} are verified. Equation
\eqref{intJTbis3} is automatically satisfied if $X$ and $Y$ are
tangent to different foliations because the manifold is a product
and the vector fields can be supposed to commute. The converse is
true by Corollary \ref{corinducedstr1}.
\end{proof}
We give now an example of a manifold endowed with a normal contact pair, with decomposable
$\phi$ and where the induced structures are normal, but
the manifold is not itself a product of two contact manifolds:
\begin{example}\label{exD1}
Let $M=\widetilde{SL_2}$ be the universal covering of the identity
component of the isometry group of the hyperbolic plane $\bH^2$
endowed with an invariant normal contact form $\alpha$ (see
\cite{Geiges2}) and $N= M \times M$. It is well known that $N$ admits cocompact irreducible lattices $\Gamma$ (see
\cite{Bor}). This means that $ \Gamma$ does not admit any subgroup
of finite index which is a product of two lattices of $M$. The
manifold $N$ can be endowed with the obvious contact pair
structure and by the invariance of the contact forms by $\Gamma$,
the contact pair descends to the quotient and is normal. Even if
the local structure is like a product, globally the foliations can
be very interesting in the sense that both could have dense
leaves.
\end{example}

Now we want to investigate deeply the condition
$L_{Z_i}\phi=0$, for $i=1,2$, since this condition is the analog of the
$K$-contact condition for the almost contact structures. In the
proof of Corollary \ref{corinducedstr2} we saw that, if the induced structures are normal, the condition $L_{Z_i}\phi=0$ is
 necessary to the integrability of both almost complex
structures . One can ask if this condition together with the integrability of one of the almost complex
structures is weaker than the integrability of both of them. We
begin with the following proposition:
\begin{proposition}\label{JTinth0}
Let $M$ be a manifold endowed with a contact pair structure
$(\alpha_1, \alpha_2, \phi)$ together with a decomposable $\phi$
and suppose that the almost complex structure $J$ associated to
the pair is integrable. Let $T$ be the almost complex structure
associated to $(\alpha_2, \alpha_1, \phi)$. Then the following
properties are equivalent:
\begin{enumerate}
\item $T$ is integrable;
\item $L_{Z_1}\phi=0$;
\item $L_{Z_2}\phi=0$.
\end{enumerate}
\end{proposition}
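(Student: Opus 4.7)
The plan is to work directly with the explicit formulas \eqref{intJ}, \eqref{intT} for $N_J$ and $N_T$, extracting information from $N_J\equiv 0$ by evaluating on the Reeb fields or contracting with $\alpha_i$. Three structural facts will be used repeatedly: (a) $[L_{Z_i}\phi](X)$ always lies in $\ker\alpha_1\cap\ker\alpha_2$ (noted just before Theorem~\ref{intJT}); (b) $\phi^2=-\mathrm{Id}$ on $\ker\alpha_1\cap\ker\alpha_2$, so $\phi$ is injective there; and (c) decomposability combined with the inclusions $T\mathcal{F}_i\subset\ker d\alpha_i$ forces many $d\alpha$-terms to vanish.

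For $(1)\Rightarrow(2),(3)$: I apply Theorem~\ref{intJT} to get \eqref{eq:intJT} and set $Y=Z_i$. Since $i_{Z_i}d\alpha_j=0$, a short computation using $\phi Z_i=0$ gives $[\phi,\phi](X,Z_i)=\phi([L_{Z_i}\phi](X))$, which together with (a) and (b) forces $L_{Z_i}\phi=0$. For $(2)\Leftrightarrow(3)$ using only $N_J=0$, evaluating $N_J(X,Z_1)=0$ and $N_J(X,Z_2)=0$ yields
\[
[L_{Z_2}\phi](X)=\phi([L_{Z_1}\phi](X)),\qquad [L_{Z_1}\phi](X)=-\phi([L_{Z_2}\phi](X)),
\]
and either identity together with (a), (b) gives the equivalence.

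The main work is in $(2)\Rightarrow(1)$. With $L_{Z_1}\phi=L_{Z_2}\phi=0$ (the second by the previous step), subtracting \eqref{intT} from \eqref{intJ} and using $(L_{\phi V}\alpha_i)(W)=d\alpha_i(\phi V,W)$ (from Cartan's formula, since $\alpha_i\circ\phi=0$) reduces $N_T=0$ to the two identities
\[
d\alpha_1(\phi X,Y)=d\alpha_1(\phi Y,X),\qquad d\alpha_2(\phi X,Y)=d\alpha_2(\phi Y,X),\qquad X,Y\in\Gamma(TM).
\]
Using the splitting $TM=T\mathcal{F}_1\oplus T\mathcal{F}_2$ and (c), only the first identity on $T\mathcal{F}_2\times T\mathcal{F}_2$ and the second on $T\mathcal{F}_1\times T\mathcal{F}_1$ are nontrivial. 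For the first, I would take $\alpha_2$ of $N_J(X,Y)=0$ with $X,Y\in\Gamma(T\mathcal{F}_2)$; after simplification (the $L_{Z_i}\phi$ terms die under $\alpha_2$, $d\alpha_2(X,Y)=0$ on $T\mathcal{F}_2$, and $\alpha_2([\phi,\phi](X,Y))=\alpha_2([\phi X,\phi Y])$) this reads
\[
\alpha_2([\phi X,\phi Y])+d\alpha_1(\phi X,Y)-d\alpha_1(\phi Y,X)=0.
\]
Since $T\mathcal{F}_2$ is integrable and $\phi$ is decomposable, $[\phi X,\phi Y]\in T\mathcal{F}_2$, on which $\alpha_2$ vanishes, so the first term is zero and the identity follows. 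The second case is symmetric, via $\alpha_1$ of $N_J(X,Y)=0$ on $T\mathcal{F}_1\times T\mathcal{F}_1$.

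The main obstacle is the algebraic bookkeeping in isolating which terms of the long formula \eqref{intJ} survive upon the various $\alpha_k$-contractions and restrictions to subbundles; once one commits to taking the correct $Z_j$-component of $N_J=0$ on $T\mathcal{F}_j\times T\mathcal{F}_j$, the identity drops out from the elementary structural fact that $\alpha_j$ vanishes on $T\mathcal{F}_j$ combined with integrability of the characteristic foliation.
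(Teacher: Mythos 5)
Your proof is correct and follows essentially the same route as the paper's: the equivalence of (2) and (3) is obtained by evaluating $N_J(\cdot,Z_i)=0$, and the integrability of $T$ is deduced from $N_J=0$ via the splitting $TM=T\mathcal{F}_1\oplus T\mathcal{F}_2$, decomposability of $\phi$, and separation of the components along the Reeb directions (equivalently, contraction with $\alpha_1,\alpha_2$). Your write-up is in fact more explicit than the paper's two-sentence treatment of the hard direction, and the identities $[L_{Z_2}\phi](X)=\phi([L_{Z_1}\phi](X))$ and $[L_{Z_1}\phi](X)=-\phi([L_{Z_2}\phi](X))$ check out.
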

\begin{proof}
Suppose that both almost complex structures are integrable, then
we have already seen in the proof of Theorem \ref{intJT} that this
implies $L_{Z_i}\phi =0$, $i=1,2$.

Conversely, since $J$ is integrable, for every $X \in \Gamma (TM)$ we have
$$
0=N_J(X,Z_2)=\phi([L_{Z_1}\phi](X))-[L_{Z_2}\phi](X) \, ,
$$
which implies that $[L_{Z_2}\phi](X)=0$ if and only if
$[L_{Z_1}\phi](X)=0$. It remains to show that $T$ is also
integrable. This can be easily seen by calculating its Nijenhuis tensor $N_T(X,Y)$. One
has just to remark that when $X,Y$ are tangent to the same foliation,
since $\phi$ is decomposable and $Z_1 , Z_2$ are not in $\ker
\alpha_1 \cap \ker \alpha_2$, then the equations obtained are
exactly \eqref{intJTbis1} and \eqref{intJTbis2}. Again,
by the decomposability of $\phi$, if $X$ and $Y$ are tangent to different foliations,
one obtains \eqref{intJTbis3}.
\end{proof}
Combining Theorem \ref{intJT} and Proposition \ref{JTinth0} we
obtain the following theorem:
\begin{theorem}\label{fifi}
Let $(\alpha_1, \alpha_2, \phi)$ be a contact pair structure on a manifold $M$ with
a decomposable $\phi$ and such that $L_{Z_1}\phi=0$ (resp. $L_{Z_2}\phi=0$),
then the following conditions are equivalent:
\begin{enumerate}
\item[i)] $J$ is integrable;

\item[ii)] $T$ is integrable;

\item[iii)] the induced structures are normal and $[\phi, \phi](X,Y)=0$
for all $X \in \Gamma(T\FF_1)$ and for all $Y \in \Gamma(T\FF_2)$.
\end{enumerate}
Moreover these equivalent conditions imply $L_{Z_2}\phi=0$ (resp. $L_{Z_1}\phi=0$).
\end{theorem}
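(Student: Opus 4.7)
The plan is to assemble Theorem~\ref{fifi} from two earlier building blocks: Corollary~\ref{corinducedstr1}, which characterizes normality of the full contact pair structure in terms of normality of the induced structures plus the mixed condition \eqref{intJTbis3}, and Proposition~\ref{JTinth0}, which in the presence of decomposability ties the integrability of $T$ to the vanishing of $L_{Z_i}\phi$ once $J$ is already integrable. Throughout I assume the first case $L_{Z_1}\phi = 0$; the other case is symmetric under interchange of the indices.

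First I would handle (i)$\Leftrightarrow$(ii). The implication (i)$\Rightarrow$(ii) is immediate from Proposition~\ref{JTinth0}: decomposability of $\phi$ and integrability of $J$ make the three conditions listed there equivalent, and $L_{Z_1}\phi=0$ is one of them, so $T$ must be integrable. For (ii)$\Rightarrow$(i) I apply Proposition~\ref{JTinth0} to the swapped contact pair structure $(\alpha_2,\alpha_1,\phi)$. The almost complex structure associated to this swapped pair in the sense of \eqref{almostcomplexJ} is exactly $T$, and its companion structure is $J$; the Reeb vector fields of the swapped pair are $Z_2$ and $Z_1$ in that order. Thus the proposition (applied in this swapped form, with $T$ playing the role of $J$) says that under integrability of $T$, the conditions ``$J$ is integrable'', ``$L_{Z_2}\phi=0$'' and ``$L_{Z_1}\phi=0$'' are equivalent. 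So the hypothesis $L_{Z_1}\phi=0$ yields integrability of $J$.

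Next I would establish the equivalence with (iii). If both $J$ and $T$ are integrable, then by definition $(\alpha_1,\alpha_2,\phi)$ is normal, and Corollary~\ref{corinducedstr1} rewrites this as ``induced structures normal and \eqref{intJTbis3} holds'', which is (iii). Conversely, assuming (iii), Corollary~\ref{corinducedstr1} again gives normality of the contact pair structure, i.e.\ simultaneous integrability of $J$ and $T$, yielding (i) and (ii). The ``moreover'' clause is then free: once (i) holds and $L_{Z_1}\phi = 0$ is assumed, Proposition~\ref{JTinth0} makes $L_{Z_2}\phi = 0$ equivalent to $L_{Z_1}\phi = 0$, so $L_{Z_2}\phi = 0$ follows.

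The argument is essentially a bookkeeping exercise with previously established results; I do not expect serious obstacles. The one point requiring care is the swapping step in (ii)$\Rightarrow$(i): one must track that when the pair $(\alpha_1,\alpha_2)$ is replaced by $(\alpha_2,\alpha_1)$ the Reeb vector fields trade roles but remain the same vector fields on $M$, so the hypothesis $L_{Z_1}\phi=0$ translates correctly into the second alternative of Proposition~\ref{JTinth0} as applied to the swapped pair. No direct computation with \eqref{intJ} or \eqref{intT} is needed in the proof itself.
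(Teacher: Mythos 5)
Your proof is correct and follows essentially the same route the paper intends: the paper derives Theorem~\ref{fifi} by ``combining Theorem~\ref{intJT} and Proposition~\ref{JTinth0}'', and your use of Corollary~\ref{corinducedstr1} in place of Theorem~\ref{intJT} is only a cosmetic difference, since under decomposability that corollary is a direct restatement of the theorem via \eqref{intJTbis1}--\eqref{intJTbis3}. Your care with the swap $(\alpha_1,\alpha_2)\mapsto(\alpha_2,\alpha_1)$ in the step (ii)$\Rightarrow$(i) --- checking that the associated structure of the swapped pair is $T$, its companion is $J$, and the Reeb fields merely exchange roles --- is exactly the point that needs attention, and you handle it correctly.
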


\subsection{Non Morimoto case}
In general when $\phi$ is decomposable, if the induced structures
are normal, the condition $L_{Z_i}\phi =0$ for $i=1,2$ does not imply
the normality of the whole structure. The following examples show
that the situation in the general case can be more complicated. In
particular, they show that there are contact pair structures with
decomposable $\phi$ and normal induced structures but, unlike the
Morimoto construction, the contact pair structure is not normal. There neither $J$ nor $T$ is integrable and \eqref{intJTbis3} is not satisfied.
\begin{example}
Consider the simply connected Lie group $G$ with structure
equations:
\begin{eqnarray*}
&d\omega_1= d\omega_6=0 \; \; , \; \; d\omega_2= \omega_5 \wedge
\omega _6\\
&d\omega_3=\omega_1 \wedge \omega_4 \; \; , \; \;
d\omega_4= \omega_1 \wedge \omega_5 \; \; , \; \; d\omega_5 =
\omega_1 \wedge \omega_6 ,
\end{eqnarray*}
where the $\omega_i$'s form a basis for the cotangent space of $G$
at the identity.

The pair $(\omega_2 , \omega_3)$ is a contact pair of type $(1,1)$
with Reeb vector fields $(X_2, X_3)$, the $X_i$'s being dual to
the $\omega_i$'s. Now define $\phi$ to be zero on the Reeb vector
fields and
$$
\phi(X_5)=X_6 \; \; , \; \; \phi(X_6)=-X_5 \; \; , \; \; \phi
(X_1)= X_4 \; \; , \; \; \phi (X_4)=-X_1 \, .
$$
Since the kernel of $\omega_2 \wedge d\omega_2$ is generated by $X_1 , X_3, X_4$, it is clearly preserved by $\phi$. The same holds for the kernel of
 $\omega_3 \wedge d\omega_3$. Moreover $\phi$ is easy verified to be invariant
under the flows of the Reeb vector fields. The induced
structures are normal, but not the whole structure because it is
well known that this Lie algebra does not admit any complex
structure.

Since the structure constants of the group are rational, there
exist lattices $\Gamma$ such that $G/ \Gamma$ is compact and then
we obtain nilmanifolds carrying the same type of structure.
\end{example}

\begin{example}
The Lie group having the following structure equations admits
invariant complex structures (see \cite{Simon}):
$$
d\omega_1=d\omega_2=d\omega_3=0 \; \; , \; \;  d\omega_4= \omega_1 \wedge
\omega_2 \; \; , \; \; d\omega_5= \omega_1 \wedge \omega_3 \; \; , \; \;
d\omega_6= \omega_2 \wedge \omega_4 \, \, .
$$
The pair $(\omega_5 , \omega_6)$ is a contact pair of type
$(1,1)$. A straightforward calculation shows that every invariant
contact pair structure of type $(1,1)$ with invariant and
decomposable $\phi$ has normal induced structures but the whole
structure is not normal since \eqref{intJTbis3} is not
satisfied.
\end{example}
According to the result of Morimoto (Corollary \ref{corolmorimoto}), the manifolds carrying contact pair structures
in the previous examples can not be, even locally, products of manifolds endowed with normal
contact forms.

%
%
\subsection{Contact pairs of type $(h,0)$}
In the particular case of a manifold $M$, endowed with a contact
pair structure $(\alpha_1 , \alpha_2, \phi)$ of type $(h,0)$, the
$1$-form $\alpha_2$ is closed and the Nijenhuis tensors of the
almost complex structures $J$ and $T$ associated to the pair
simplify further.
Moreover the tensor $\phi$ is automatically decomposable because
$\alpha_2 \circ \phi=0$ implies that $\phi (T\mathcal{F}_2)
\subset T\mathcal{F}_2$. Since $\Gamma (T\mathcal{F}_1) $ is generated by $Z_2$ and $\phi(Z_2)=0$, we also have
$\phi (T\FF_1) \subset T\mathcal {F}_1$.

The following is a variation of the Theorem \ref{fifi} for contact
pairs of type $(h,0)$:
\begin{theorem}\label{moricph0}
Let $M$ be a manifold endowed with a contact pair structure $(\alpha_1 , \alpha_2 , \phi)$  of
type $(h,0)$, such that $L_{Z_2} \phi=0$. Then  $(\alpha_1 , \alpha_2 , \phi)$ is a normal contact pair if and only if
$(\alpha_1 , \phi)$ induced on every leaf of $\mathcal{F}_2$
is normal.
\end{theorem}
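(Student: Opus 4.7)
The plan is to reduce the theorem to the already-proved Theorem \ref{fifi}. In type $(h,0)$, the tensor $\phi$ is automatically decomposable (as noted immediately before the statement), so the hypothesis $L_{Z_2}\phi=0$ exactly matches the assumptions of Theorem \ref{fifi}. That theorem tells me that the contact pair structure is normal if and only if the two induced almost contact structures (on the leaves of $\mathcal{F}_1$ and $\mathcal{F}_2$) are normal and the mixed condition $[\phi,\phi](X,Y)=0$ holds for $X\in\Gamma(T\mathcal{F}_1)$ and $Y\in\Gamma(T\mathcal{F}_2)$, i.e., equation \eqref{intJTbis3}.

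Next I would observe that in the $(h,0)$ setting the induced structure on the leaves of $\mathcal{F}_1$ contributes nothing. Since $k=0$, each leaf of $\mathcal{F}_1$ has dimension $2k+1=1$, with tangent bundle $\mathbb{R}Z_2$. Because $\phi(Z_2)=0$ and $d\alpha_2=0$, the normality equation \eqref{intJTbis2} on such a one-dimensional leaf is automatic. So among the conditions listed in (iii) of Theorem \ref{fifi}, only the normality of the almost contact structure $(\alpha_1,Z_1,\phi)$ on each leaf of $\mathcal{F}_2$ remains genuinely open; that is precisely the condition in the statement.

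It then suffices to dispose of the mixed Nijenhuis condition \eqref{intJTbis3}. Since $T\mathcal{F}_1=\mathbb{R}Z_2$ and $[\phi,\phi]$ is tensorial, it is enough to show $[\phi,\phi](Z_2,Y)=0$ for every $Y\in\Gamma(T\mathcal{F}_2)$. Using $\phi Z_2=0$, two of the four terms in the Nijenhuis bracket die and I am left with $\phi^{2}[Z_2,Y]-\phi[Z_2,\phi Y]$. The hypothesis $L_{Z_2}\phi=0$ gives $[Z_2,\phi Y]=\phi[Z_2,Y]$, so this expression collapses to zero. Thus \eqref{intJTbis3} holds automatically, and Theorem \ref{fifi} yields the desired equivalence.

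There is no substantive obstacle here: Theorem \ref{fifi} does all the real work. The only care needed is the bookkeeping — verifying the dimension count for $\mathcal{F}_1$-leaves, invoking decomposability for free from type $(h,0)$, and tracking which terms in the Nijenhuis tensor vanish once $\phi Z_2=0$ and $L_{Z_2}\phi=0$ are used.
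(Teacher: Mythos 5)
Your argument is correct and follows exactly the route the paper intends: the paper states Theorem \ref{moricph0} as ``a variation of Theorem \ref{fifi}'' without writing out a proof, and your reduction via automatic decomposability in type $(h,0)$, the vacuous normality on the one-dimensional leaves of $\mathcal{F}_1$ (where $T\mathcal{F}_1=\mathbb{R}Z_2$), and the computation $[\phi,\phi](Z_2,Y)=\phi^2[Z_2,Y]-\phi[Z_2,\phi Y]=0$ from $\phi Z_2=0$ and $L_{Z_2}\phi=0$ supplies precisely the details the paper leaves implicit. No gaps.
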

%
Defining normality for a contact manifold $(M, \alpha)$ with
structure tensor $\phi$, is the same as considering the contact
pair $(\alpha, dt)$ on $M \times \bR$ and asking for its almost
complex structure to be integrable. This is exactly the local
situation of the previous theorem.
\begin{remark}
A manifold endowed with a normal contact pair of type $(h,0)$ can
be viewed as an even analog of a cosymplectic manifold.
\end{remark}
We end this section with the following example:
\begin{example}\label{exD2}
Let us consider the simply connected nilpotent Lie group $Nil^4$,
having the following structure equations:
$$
d\omega_1=d\omega_4=0 \; \; , \; \; d\omega_2=\omega_1 \wedge
\omega_4 \; \; , \; \; d\omega_3=\omega_2 \wedge \omega_4 \, .
$$
The pair $(\omega_ 3,\omega_ 1 )$ is a contact pair of type
$(1,0)$. Since the structure constants of the group are rational,
then there exist cocompact lattices and the corresponding
nilmanifold are endowed with a contact pair structure and hence
with an almost complex structure. Nevertheless this contact pair
can not be normal since no such nilmanifold admits complex
structures. This can be seen for
example by saying that such a nilmanifold has first Betti number
$b_1=2$ (see \cite{N}) and if it is complex with even first Betti
number then it must be K\"ahler by \cite{buch}. But the only
nilmanifolds which are K\"ahler must be Tori (see \cite{BG}) and
this is not the case.
\end{example}

\subsection{Remarks on bicontact Hermitian manifolds}
Contact pairs appeared firstly in \cite{Blair2}, where they arose in the context of the Hermitian geometry with the name \textit{bicontact}. 

More precisely a \textit{bicontact Hermitian} manifold is a Hermitian manifold $(M, J, g)$ together with a unit vector field $U$ such that $U$ and $V=JU$ are infinitesimal automorphisms of the Hermitian structure. Let $u$ and $v$ be the covariant forms of $U$ and $V$ respectively. The bicontact manifold $M$ is said to be of bidegree $(1,1)$ if $du$ is of bidegree $(1,1)$ and in this case $dv$ is of bidegree $(1,1)$ too. 

Actually, a bicontact Hermitian manifold $(M, J, g, U)$ of bidegree $(1,1)$ can be regarded as a manifold endowed with a normal contact pair structure $(u, v, \phi)$, where $\phi=J+ v \otimes U - u \otimes V$ is decomposable, together with a metric $g$ which is compatible in the sense of \cite{BH2}. This easily follows from Propositions $2.7$ and $2.8$ of \cite{Blair2} and by the fact that the bidegree $(1,1)$ of $du$ implies the decomposability of $\phi$.
By using Propositions \ref{JTinth0} and \ref{prop:induced C structures} and the local model for a contact pair (see \cite{Bande1, BH}), Theorem $4.4$ of \cite{Blair2} can be restated in terms of  normal contact pairs with decomposable $\phi$. Moreover Theorem $4.4$ of \cite{Blair2} implies the necessary condition of Corollary \ref{corinducedstr1}.

\section{Constructions on flat bundles}\label{s5}

Flat bundles are fibre bundles with a foliation transverse and
complementary to the fibre and have been useful to
construct symplectic pairs in \cite{BK}. In the same paper
was pointed out that one can use these bundles to construct
contact pairs. We describe the general construction of flat
bundles and then we specialize to contact pair structures.

Let $B$ and $F$ be two connected manifolds, and let
$\rho\colon\pi_{1}(B)\rightarrow\Diff(F)$ be a representation of
the fundamental group of $B$ in the group of diffeomorphisms of
$F$. The suspension of $\rho$ defines a horizontal foliation (whose holonomy is $\rho$) on
the fiber bundle $\pi\colon M_{\rho}\rightarrow B$ with fiber $F$ and
total space
$$M_{\rho}=(\tilde
B\times F)/\pi_{1}(B),$$ where $\pi_{1}(B)$ acts on the universal
covering $\tilde B$ by covering transformations and on $F$ via
$\rho$. We have the following commutative diagram:
$$
\begin{CD}
\tilde B \times F @>\pi_{\rho}>> (\tilde
B\times F)/\pi_{1}(B)\\
@VQ VV @V\pi VV\\
\tilde B @>p>> B
\end{CD}
$$
where $Q$ is the projection on the first factor, $p$ the covering
projection, $\pi$ the projection of the bundle and $\pi_{\rho}$
the quotient map.

Let us consider contact manifolds $(B, \alpha_1, Z_1 ,\phi_1)$
and $(F, \alpha_2 , Z_2 ,\phi_2)$  with structure tensors $\phi_1
$ and $\phi_2$ respectively. Instead of taking a representation of
$\pi_1 (B)$ in $\Diff(F)$, we take a representation $\rho$ in $\Cont
(F , \phi_2)$, the group of contactomorphisms preserving $\phi_2$,
and we construct the flat bundles by using this representation.

Let $(\alpha_1 ,\alpha_2, \phi_1\oplus\phi_2)$ be the contact pair structure of $B \times F$, $J$ its almost complex structure and $T$ the
almost complex structure of $(\alpha_2, \alpha_1 ,\phi_1\oplus\phi_2)$.
Then, by Morimoto's result (Corollary \ref{corolmorimoto}), $J$ is integrable if and only if $(B, \alpha_1, Z_1 ,\phi_1)$ and
$(F, \alpha_2 , Z_2 ,\phi_2)$ are normal and this if and only if $T$ is integrable.

The manifold $\tilde B \times F$
is naturally endowed with a contact pair structure $(\tilde \alpha_1 ,\alpha_2, \tilde\phi_1
\oplus\phi_2)$ where $\tilde \alpha_1$ and $\tilde \phi_1$ are the lift to $\tilde B$ of $\alpha_1$ and $\phi_1$ respectively.
The almost complex structure $\tilde J$ associated to it, is the lift of $J$ and then it is integrable if and only if $J$ is integrable.

Since $(\tilde\alpha_1 ,\alpha_2, \tilde \phi_1\oplus\phi_2)$ is invariant by the action of $\pi_{1}(B)$,
 the total space of the flat bundle $M_{\rho}=(\tilde B\times F)/\pi_{1}(B)$ is endowed
with a contact pair structure, denoted by $(\tilde \alpha_1 ,\alpha_2, \tilde \phi_1
\oplus\phi_2)_{\rho}$. The almost complex structure $\tilde J$ descends to the quotient and it defines the almost complex structure $J_{\rho}$ of
$(\tilde \alpha_1 ,\alpha_2, \tilde \phi_1
\oplus\phi_2)_{\rho}$. Then $J_{\rho}$ is integrable if and only if its lift $\tilde J$ is integrable.
Starting with $T$, we obtain the almost complex structure $T_{\rho}$ associated to $(\alpha_2, \tilde \alpha_1 , \tilde \phi_1
\oplus\phi_2)_{\rho}$.

The above discussion yields the following theorem:
\begin{theorem}\label{th:flat-bundles}
Let $(B, \alpha_1,Z_1 , \phi_1)$ and $(F, \alpha_2 , Z_2 ,
\phi_2)$ be two connected contact manifolds with structure tensors $\phi_1$
and $\phi_2$ respectively and $\rho$ any representation of $\pi_1
(B)$ in $\Cont (F , \phi_2)$. Then the flat bundle
$M_{\rho}=(\tilde B\times F)/\pi_{1}(B),$ is naturally endowed
with a contact pair structure $(\tilde \alpha_1 ,\alpha_2, \tilde
\phi_1 \oplus\phi_2)_{\rho}$. This contact pair structure is normal if and only if
$(B, \alpha_1, \phi_1)$ and $(F, \alpha_2 , \phi_2)$ are normal.
\end{theorem}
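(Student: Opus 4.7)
The plan is to lift everything to $\tilde B \times F$, observe invariance of the lifted contact pair structure under the diagonal $\pi_1(B)$-action, descend to the quotient $M_\rho$, and finally reduce the normality question to Morimoto's theorem (Corollary \ref{corolmorimoto}) applied to the product $B \times F$. The discussion immediately preceding the statement already outlines this; what remains is to organize the verifications carefully.

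First I would endow $\tilde B \times F$ with the contact pair structure $(\tilde\alpha_1, \alpha_2, \tilde\phi_1 \oplus \phi_2)$, where $\tilde\alpha_1$ and $\tilde\phi_1$ are the pullbacks of $\alpha_1$ and $\phi_1$ by the covering projection $p\colon \tilde B \to B$. That this is a contact pair structure in the sense of the paper follows from the product construction used in Corollary \ref{corolmorimoto}, with Reeb fields the horizontal and vertical lifts of $Z_1$ and $Z_2$. Then I would check invariance under the $\pi_1(B)$-action defining $M_\rho$: on the $\tilde B$ factor, $\pi_1(B)$ acts by deck transformations, which preserve $\tilde\alpha_1$ and $\tilde\phi_1$ tautologically (they are pullbacks from $B$); on the $F$ factor, $\pi_1(B)$ acts through $\rho$, whose image lies in $\Cont(F,\phi_2)$, hence preserves both $\alpha_2$ and $\phi_2$. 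Since the action is diagonal, it preserves the direct-sum tensor $\tilde\phi_1 \oplus \phi_2$ and both $1$-forms. Consequently the structure descends to a well-defined contact pair structure $(\tilde\alpha_1, \alpha_2, \tilde\phi_1 \oplus \phi_2)_\rho$ on $M_\rho$, proving the first assertion.

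For the normality statement, I would use that the quotient map $\pi_\rho\colon \tilde B \times F \to M_\rho$ is a local diffeomorphism and that both the almost complex structures $J_\rho, T_\rho$ of the quotient pair are by construction the push-forwards of the corresponding invariant almost complex structures $\tilde J, \tilde T$ on $\tilde B \times F$. Integrability of an almost complex structure is a local condition expressed by the vanishing of its Nijenhuis tensor, so $J_\rho$ is integrable if and only if $\tilde J$ is, and similarly for $T$. In turn, $p \times \id_F \colon \tilde B \times F \to B \times F$ is a local diffeomorphism under which $\tilde J$ and $\tilde T$ are the pullbacks of $J$ and $T$ on $B \times F$; so integrability lifts and descends along it as well. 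Combining these two local-diffeomorphism arguments, $(\tilde\alpha_1, \alpha_2, \tilde\phi_1 \oplus \phi_2)_\rho$ is normal if and only if the product structure $(\alpha_1, \alpha_2, \phi_1 \oplus \phi_2)$ on $B \times F$ is normal.

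Finally, Morimoto's theorem in the form of Corollary \ref{corolmorimoto} shows the latter is equivalent to $(\alpha_1, Z_1, \phi_1)$ and $(\alpha_2, Z_2, \phi_2)$ being normal almost contact structures, giving the desired equivalence. The only genuinely non-mechanical step is the invariance check, and even that reduces to observing that each piece of the structure is invariant on its own factor, precisely because $\rho$ was chosen to take values in $\Cont(F,\phi_2)$ rather than merely $\Diff(F)$; everything else is an application of results already in the paper.
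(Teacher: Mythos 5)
Your proposal is correct and follows essentially the same route as the paper: lift the product structure to $\tilde B\times F$, check invariance under the diagonal $\pi_1(B)$-action (deck transformations on $\tilde B$, $\rho$-action through $\Cont(F,\phi_2)$ on $F$), descend to $M_\rho$, transfer integrability of $J$ and $T$ along the local diffeomorphisms, and conclude by Corollary~\ref{corolmorimoto}. The paper presents exactly this argument in the discussion preceding the theorem, so nothing further is needed.
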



By choosing normal contact forms on $B$ and $F$ and a non trivial
representation $\rho$, this construction furnishes examples of
complex manifolds which are locally but not globally product of
contact manifolds as in Morimoto's theorem (see Corollary
\ref{corolmorimoto}). Here is an explicit example:

\begin{example}
Consider a closed manifold $F$ endowed with a normal $K$-contact
structure, for example Sasakian, $\left(  \alpha_{2},Z_{2},\phi_{2},g\right) $. In this case the one parameter group of
diffeomorphisms $\left\{ \varphi_{t},t\in\mathbb{R}\right\}$
generated by the flow of the Reeb vector field $Z_{2}$ is a non
trivial subgroup of $\Cont\left(  F, \phi_{2}\right)  .$ Pick any
element $\varphi_{a}$ which is not the identity. Let $B= Nil^{3}/
\Gamma$ where $Nil^{3}$ is the Heisenberg group of upper
triangular real $\left(  3\times3\right)  -$matrices
\[
\left(
\begin{array}
[c]{lll}%
1 & x & z\\
0 & 1 & y\\
0 & 0 & 1
\end{array}
\right)
\]
and $\Gamma$ the discrete subgroup consisting of all its elements with integer
entries. Then $B$ is a closed 3-manifold for which $\pi_{1}\left(  B\right)
=\Gamma$, since $Nil^3$ is simply connected. The invariant normal contact structure $\left(  \omega
,Z,\Phi\right)  $ on $Nil^{3}$ given by $\omega=dz-xdy$ and $\Phi$ defined by
$\Phi(\frac{\partial}{\partial x})=-\left(  \frac{\partial}{\partial y}+x \frac{\partial}{\partial z}\right)$
descends to $B$ as a normal contact structure $\left(  \alpha_{1},Z_{1}
,\phi_{1}\right)$ (see \cite{Geiges2}). Now choose the representation $\rho:\pi_{1}\left(
B\right)  \rightarrow \Cont\left(  F,\phi_{2}\right)$ defined  by
\[
\rho\left(
\begin{array}
[c]{lll}%
1 & p & r\\
0 & 1 & q\\
0 & 0 & 1
\end{array}
\right)  =\left(  \varphi_{a}\right)  ^{p},
\]
for all integers $p,$ $q$ and $r$.
In the same way, we can find other examples by using the Geiges'
classification \cite{Geiges2} and \cite{scott}.
\end{example}

\section{Constructions on Boothby--Wang fibrations}\label{s6}
In this section we use the Boothby--Wang fibration to construct
$\bS^1$-invariant contact pair structures on the total space of a principal circle
bundle over a base space endowed with a contact-symplectic pair
structure.

For a given closed manifold $B$ endowed with a contact-symplectic
pair $(\beta, \eta)$, if $\eta$ has integral cohomology class, as
showed in Section \ref{s:prelim}, one can construct a
Boothby--Wang fibration and obtain as total space a manifold $M$
endowed with a contact pair $(\alpha_1, \alpha_2)$ where
$\alpha_2=\pi^* (\beta)$ and $\alpha_1$ is the connection form of
the bundle and then $d\alpha_1=\pi^*(\eta)$, where $\pi$ is the bundle
projection. Let $Z_1$ and $Z_2$ be the Reeb vector fields of $(\alpha_1, \alpha_2)$. Then $Z_1$ is tangent to the action and $Z_2$ is the
horizontal lift of the Reeb vector field $W$ of $(\beta, \eta)$ with respect to the connection form $\alpha_1$.

The following result is the analog for contact pair structures of the construction used in \cite{Mori, Hata}:
\begin{theorem}\label{th:BW-cont-pair-str}
The total space $M$ of a Boothby--Wang fibration over a closed base space $B$
endowed with an almost contact-symplectic structure $(\beta, \eta, \psi)$, where $[\eta]
\in H^2(B, \bZ)$, is naturally
endowed with a $\bS^1$-invariant contact pair structure $(\alpha_1, \alpha_2, \phi)$. Moreover, if $\psi$ is decomposable so is $\phi$.
\end{theorem}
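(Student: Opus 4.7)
The contact pair $(\alpha_1,\alpha_2)$ with $\alpha_2=\pi^{*}\beta$ and $d\alpha_1=\pi^{*}\eta$ is already provided by Theorem \ref{thBWcsp}, so the issue is really the construction of the tensor $\phi$ and the verification of \eqref{d:cpstructure}, of $\bS^1$-invariance, and of decomposability. The plan is to build $\phi$ out of $\psi$ via the connection, then translate the algebraic identity $\psi^2=-\mathrm{Id}+\beta\otimes W$ on $B$ into the corresponding identity on $M$, using that $\pi_{*}Z_2=W$ and $\pi_{*}Z_1=0$.

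First I would fix the splitting $TM=\ker\alpha_1\oplus\bR Z_1$ coming from the connection $\alpha_1$; every $X\in\Gamma(TM)$ decomposes as $X=X^{h}+\alpha_1(X)Z_1$ with $X^{h}$ horizontal. Since the polarization construction recalled in Subsection \ref{subsec:alm-cont-sympl-str} yields $\psi$ with $\psi(W)=0$ and $\psi(\ker\beta)\subset\ker\beta$, I define
\begin{equation*}
\phi(X):=\bigl(\psi(\pi_{*}X)\bigr)^{h},
\end{equation*}
where $(\,\cdot\,)^{h}$ denotes the horizontal lift with respect to $\alpha_1$. This makes $\phi(Z_1)=0$ automatically, and $\phi(Z_2)=(\psi(W))^{h}=0$ because $\pi_{*}Z_2=W$ and $\psi(W)=0$. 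To check $\phi^{2}=-\mathrm{Id}+\alpha_1\otimes Z_1+\alpha_2\otimes Z_2$ it suffices to test on horizontal vectors (where $\alpha_1(X)=0$) and on $Z_1$, since both sides agree on $Z_1$ by inspection; on a horizontal $X$ one has $\phi^{2}(X)=(\psi^{2}(\pi_{*}X))^{h}=(-\pi_{*}X+\beta(\pi_{*}X)W)^{h}=-X+\alpha_2(X)Z_2$, which is exactly the required identity.

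The $\bS^{1}$-invariance of $\phi$ is essentially automatic: $\alpha_1$ is a connection form, hence $\bS^{1}$-invariant, so the horizontal distribution is invariant; $\pi$ is $\bS^{1}$-equivariant with trivial action on the base; and $\psi$ lives on $B$, so its horizontal lift commutes with the $\bS^{1}$-action. Combined with the invariance of $\alpha_1$ and $\alpha_2=\pi^{*}\beta$, this gives an $\bS^{1}$-invariant contact pair structure.

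For the final claim I would identify the characteristic foliations on $M$ with lifts of those on $B$. Using $d\alpha_1=\pi^{*}\eta$ and $d\alpha_2=\pi^{*}d\beta$, a direct computation shows $T\mathcal{F}_1=(\ker\eta)^{h}$ and $T\mathcal{F}_2=(T\mathcal{F}_2^{B})^{h}\oplus\bR Z_1$, where $T\mathcal{F}_2^{B}=\ker\beta\cap\ker d\beta$ is the characteristic foliation of $\beta$ on $B$. Decomposability of $\psi$ means $\psi(T\HHH)=T\HHH$ and $\psi(T\mathcal{F}_2^{B})=T\mathcal{F}_2^{B}$, hence in particular $\psi(\ker\eta)\subset\ker\eta$ since $\ker\eta=\bR W\oplus T\HHH$ and $\psi(W)=0$. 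Lifting horizontally and recalling $\phi(Z_1)=0$, one concludes $\phi(T\mathcal{F}_i)\subset T\mathcal{F}_i$ for $i=1,2$, which is the decomposability of $\phi$. The main point requiring care here is the correct identification of $T\mathcal{F}_1,T\mathcal{F}_2$ on $M$ in the splitting $TM=(\ker\alpha_1)\oplus\bR Z_1$, and in particular the fact that $Z_1\in T\mathcal{F}_2$ but $Z_1\notin T\mathcal{F}_1$; once this is in place, the decomposability statement reduces to the corresponding property of $\psi$.
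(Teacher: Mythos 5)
Your proposal is correct and follows essentially the same route as the paper: define $\phi X=(\psi\pi_{*}X)^{h}$ via the horizontal lift for the connection $\alpha_1$, verify $\phi^{2}=-\id+\alpha_1\otimes Z_1+\alpha_2\otimes Z_2$ and $\phi Z_1=\phi Z_2=0$ from $\psi^{2}=-\id+\beta\otimes W$, and deduce decomposability of $\phi$ from that of $\psi$ by identifying $T\mathcal F_1=(\ker\eta)^{h}$ and $T\mathcal F_2=(T\mathcal F_2^{B})^{h}\oplus\bR Z_1$. You merely spell out a few steps the paper leaves implicit ($\bS^1$-invariance and the foliation identifications), which is fine.
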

\begin{proof}
With the previous notations, let $(\alpha_1, \alpha_2)$ be the contact pair on $M$, with Reeb vector fields $Z_1$, $Z_2$. For any tangent vector $Y$ of
$B$ at $q= \pi (p)$, we denote by $Y_p ^*$ the horizontal lift (with respect to the connection form $\alpha_1$) of $Y$ at $p \in M$.
Let $\phi$ be the endomorphism of $TM$ defined as follows
$$
\phi _p X= (\psi \pi _* X)_p^* \, ,
$$
for every $X \in T_p M$, $\pi_*$ being the differential of the projection $\pi$.

The triple $(\alpha_1 , \alpha_2, \phi)$ is a contact pair
structure on $M$. To see that, we first remark that $\phi (X^*)= (\psi
X)^*$ and $(\pi_* X)^*=X-  \alpha_1(X) Z_1$. Then we have
$$
\phi^2 (X)= \phi (\psi \pi_* X)^*=(\psi^2 \pi_* X)^*=(-\pi_* X +
\beta(\pi_* X)W)^*=-X + \alpha_1 (X) Z_1 + \alpha_2 (X) Z_2 .
$$
Moreover, we have $\phi Z_1=0$, because $\pi_* Z_1=0$ and $\phi Z_2=\phi (W^*)=(\psi
W)^*=0$, because $\psi W=0$ by the definition of almost contact-symplectic structure. If $\psi$ is decomposable, the decomposability of $\phi$ can
be easily verified on lifted vector fields. Observe that $L_{Z_1} \phi =0$ by construction.
\end{proof}

Now we want to relate the normality of the contact pair structure
on the total space to that of the almost contact-symplectic structure on
the base. With the previous notations we have:
\begin{lemma}\label{lemma:BW-cont-pair-str-normality}
Let $B$ be a closed manifold endowed with an almost
contact-symplectic structure $(\beta, \eta, \psi)$ with $[\eta]
\in H^2(B, \bZ)$ and $M$ the total space of the corresponding
Boothby--Wang fibration, endowed with the $\bS^1$-invariant contact pair structure
$(\alpha_1 , \alpha_2, \phi)$ of Theorem
\ref{th:BW-cont-pair-str}. Then the almost complex structure $J$
associated to $(\alpha_1 , \alpha_2, \phi)$ is integrable if and
only if the following conditions on the base are satisfied:
\begin{align}
-2 \eta(\psi X, \psi Y)+2 \eta(X,Y)-d\beta (\psi X, Y)- d\beta
(X , \psi Y)&=0 \label{eq:1:BW}\\
[\psi, \psi] (X,Y) + 2 d\beta (X,Y) + \eta (\psi X, Y)+ \eta (X,
\psi Y)&=0 \label{eq:2:BW}\\
L_W \psi &=0 \label{eq:3:BW}.
\end{align}
\end{lemma}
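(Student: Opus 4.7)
The plan is to verify $N_J=0$ on a generating set for $TM$, exploiting the principal $\mathbb{S}^1$-bundle structure. Since $TM = \mathrm{Hor} \oplus \mathbb{R}Z_1$ with the horizontal distribution spanned pointwise by horizontal lifts $X^*$ of vector fields $X$ on $B$, and since $N_J$ is tensorial, it suffices to examine $N_J(X^*,Y^*)$ and $N_J(X^*,Z_1)$. The key structural identities are: $\phi(X^*) = (\psi X)^*$, $\phi(Z_1)=\phi(Z_2)=0$, $Z_2 = W^*$; the curvature formula $[X^*,Y^*] = [X,Y]^* - \eta(X,Y)Z_1$ (from $d\alpha_1 = \pi^*\eta$); the $\mathbb{S}^1$-invariance consequences $[Z_1,X^*]=0$, $L_{Z_1}\phi=0$; and the algebraic relations $\psi W = 0$, $\beta\circ\psi = 0$, $i_W\eta=0$.

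I would handle $N_J(X^*,Z_1)$ first. In formula \eqref{intJ}, most terms collapse: $[\phi,\phi](X^*,Z_1)=0$ because $\phi Z_1=0$ and $[X^*,Z_1]=0$; the $d\alpha_i(X^*,Z_1)$ contractions vanish; $[L_{Z_1}\phi]=0$; and the remaining $\alpha_1$-, $\alpha_2$-weights kill the Lie-derivative-of-$\alpha_i$ terms. The single surviving summand is $-\alpha_1(Z_1)[L_{Z_2}\phi](X^*) = -[L_{W^*}\phi](X^*)$. Using $[W^*,X^*]=[W,X]^*$ (since $i_W\eta=0$) and the definition of $\phi$, a short computation gives $[L_{W^*}\phi](X^*) = ([L_W\psi](X))^*$, so this case is equivalent to \eqref{eq:3:BW}.

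For $N_J(X^*,Y^*)$, I substitute into \eqref{intJ}. The pieces are:
\begin{itemize}
\item $[\phi,\phi](X^*,Y^*) = ([\psi,\psi](X,Y))^* - \eta(\psi X,\psi Y)Z_1$, obtained by expanding the four brackets, using $\phi^2(X^*) = -X^* + \beta(X)Z_2$, and observing that the $\beta([X,Y])Z_2$ terms cancel against the $-\beta([X,Y])Z_2$ term produced when converting the horizontal part back to $([\psi,\psi](X,Y))^*$;
\item $2d\alpha_1(X^*,Y^*)Z_1 = 2\eta(X,Y)Z_1$ and $2d\alpha_2(X^*,Y^*)Z_2 = 2d\beta(X,Y)Z_2$;
\item $(L_{\phi X^*}\alpha_1)(Y^*) = \eta(\psi X,Y)$ (from $\alpha_1([(\psi X)^*,Y^*]) = -\eta(\psi X,Y)$), contributing $[\eta(\psi X,Y)+\eta(X,\psi Y)]Z_2$;
\item $(L_{\phi X^*}\alpha_2)(Y^*) = d\beta(\psi X,Y)$ (via $L_{\psi X}\beta = i_{\psi X}d\beta + d(i_{\psi X}\beta) = d\beta(\psi X,\cdot)$, using $\beta\circ\psi=0$), contributing $-[d\beta(\psi X,Y)+d\beta(X,\psi Y)]Z_1$;
\item all terms with $\alpha_1(X^*),\alpha_1(Y^*)$ or $L_{Z_1}\phi$ vanish.
\end{itemize}
Grouping by the $\mathrm{Hor}\oplus\mathbb{R}Z_1$ decomposition, the $Z_1$-component is exactly \eqref{eq:1:BW} and the horizontal component, after identifying $Z_2$ with $W^*$, is the horizontal lift of \eqref{eq:2:BW}. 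Thus $N_J(X^*,Y^*)=0$ for all lifts iff \eqref{eq:1:BW} and \eqref{eq:2:BW} hold on $B$.

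The main obstacle is the $(X^*,Y^*)$ calculation: it is a careful bookkeeping exercise requiring one to reassemble the horizontal part of $[\phi,\phi]$ into $[\psi,\psi]$ modulo an explicit Reeb-direction contribution on $B$, and to track how the curvature term $-\eta(X,Y)Z_1$ in $[X^*,Y^*]$ propagates through $\phi$ and the Lie-derivative formulas so that the final coefficients match \eqref{eq:1:BW} and \eqref{eq:2:BW} without residual terms. The identities $\beta\circ\psi=0$ and $i_W\eta=0$ are essential here for eliminating nuisance terms such as $Y(\beta(\psi X))$ and $\eta(W,\cdot)$.
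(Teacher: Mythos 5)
Your overall strategy is exactly the one the paper uses: decompose $TM$ into horizontal lifts and the vertical direction $\bR Z_1$, evaluate $N_J$ on the pairs $(X^*,Z_1)$ and $(X^*,Y^*)$ via formula \eqref{intJ}, and translate each piece into a condition on $B$. Your treatment of $N_J(X^*,Z_1)$ is correct and complete: the reduction to $-[L_{Z_2}\phi](X^*)$ and the identity $[L_{W^*}\phi](X^*)=([L_W\psi]X)^*$ (using $i_W\eta=0$) do give the equivalence with \eqref{eq:3:BW}, independently of any normalization issues.

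There is, however, a concrete inconsistency in the $(X^*,Y^*)$ case. Your own intermediate formulas give, for the $Z_1$-component, the sum $-\eta(\psi X,\psi Y)+2\eta(X,Y)-d\beta(\psi X,Y)-d\beta(X,\psi Y)$, coming from $[\phi,\phi](X^*,Y^*)=([\psi,\psi](X,Y))^*-\eta(\psi X,\psi Y)Z_1$ together with $2d\alpha_1(X^*,Y^*)Z_1=2\eta(X,Y)Z_1$ and the $L\alpha_2$ terms. This is \emph{not} equation \eqref{eq:1:BW}, whose first term is $-2\eta(\psi X,\psi Y)$; the claimed match fails by a factor of $2$ on that term. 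The source of the trouble is the exterior-derivative convention: formula \eqref{intJ} (like the almost-contact normality condition $[\phi,\phi]+2d\alpha\otimes Z=0$, which one can recover by recomputing $N_J$ on $M\times\bR$) presupposes $2d\alpha(X,Y)=X\alpha(Y)-Y\alpha(X)-\alpha([X,Y])$. Under that convention the curvature identity reads $[X^*,Y^*]=[X,Y]^*-2\eta(X,Y)Z_1$, so the bracket $[\phi X^*,\phi Y^*]$ contributes $-2\eta(\psi X,\psi Y)Z_1$, and likewise $(L_{\phi X^*}\alpha_1)(Y^*)=2\eta(\psi X,Y)$ and $(L_{\phi X^*}\alpha_2)(Y^*)=2d\beta(\psi X,Y)$ (Cartan's formula on one-forms becomes $L_X=2i_Xd+di_X$ with this normalization). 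You have instead used the standard convention for the curvature identity and for Cartan's formula while keeping \eqref{intJ} as written, so the two halves of your computation are normalized differently and the displayed contributions cannot be assembled into \eqref{eq:1:BW}--\eqref{eq:2:BW} as claimed. To close the gap you must fix one convention, recompute all four convention-sensitive quantities ($[X^*,Y^*]$, $[\phi X^*,\phi Y^*]$, $(L_{\phi X^*}\alpha_1)(Y^*)$, $(L_{\phi X^*}\alpha_2)(Y^*)$) consistently, and then compare with the target equations; note that the relative factors of $2$ drop out of the applications (where $\eta(\psi X,\psi Y)=\eta(X,Y)$), but for the lemma as stated the coefficients matter.
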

\begin{proof}
The tensor $N_J$ vanishes if and only if $N_J(Z_1, X^*)=0$ and
$N_J( X^*, Y^*)=0$ for every lifted vector fields $X^*, Y^*$ (with respect to the connection form $\alpha_1$) and
for the vertical vector field $Z_1$. A straightforward calculation shows that $N_J(Z_1, X^*)=0$ is equivalent to \eqref{eq:3:BW} and
$N_J( X^*, Y^*)=0$ is equivalent to \eqref{eq:1:BW} and \eqref{eq:2:BW}.
\end{proof}

As a consequence of the above lemma we have:
\begin{theorem}\label{corBW}
With the same assumptions as  in Lemma
\ref{lemma:BW-cont-pair-str-normality}, if $\eta$ is invariant
under $\psi$, that is $\eta(\psi X, \psi Y)=\eta(X,Y)$, the $\bS^1$-invariant
contact pair structure on the total space of the Boothby--Wang
fibration has integrable $J$ if and only if the almost
contact-symplectic structure on the base is a normal almost
contact structure.
\end{theorem}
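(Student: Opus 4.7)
The plan is to apply Lemma~\ref{lemma:BW-cont-pair-str-normality}, which reformulates the integrability of $J$ as the triple of conditions \eqref{eq:1:BW}, \eqref{eq:2:BW}, \eqref{eq:3:BW} on $B$, and then to show that, under the hypothesis $\eta(\psi X,\psi Y) = \eta(X,Y)$, this triple is equivalent to the normality of the almost contact structure $(\beta, W, \psi)$, namely $[\psi,\psi](X,Y) + 2\,d\beta(X,Y)\,W = 0$.

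First I would observe that, using $\psi^{2} = -\id + \beta\otimes W$ and $i_W \eta = 0$, the $\psi$-invariance of $\eta$ yields by the substitution $Y \mapsto \psi Y$ the skew identity $\eta(\psi X, Y) + \eta(X, \psi Y) = 0$. Plugged into \eqref{eq:2:BW}, this makes the two $\eta$-terms vanish and turns \eqref{eq:2:BW} into the normality condition for $(\beta, W, \psi)$. This immediately gives the implication from the integrability of $J$ to the normality of the base structure. For the converse, I must argue that \eqref{eq:1:BW} and \eqref{eq:3:BW} are automatic consequences of normality.

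For \eqref{eq:3:BW}, setting $X = W$ in the normality equation and using $\psi W = 0$ together with $i_W d\beta = 0$ collapses $[\psi,\psi](W, Y)$ to $\psi((L_W\psi)(Y)) = 0$, so $(L_W\psi)(Y)$ takes values in $\bR W$; a further contraction with $\beta$, using $\beta([W,\psi Y]) = -d\beta(W,\psi Y) = 0$, kills this last component and yields $L_W\psi = 0$. For \eqref{eq:1:BW}, I would contract the normality equation with $\beta$: since $\beta\circ\psi = 0 = \beta\circ\psi^{2}$, only the $[\psi X, \psi Y]$ term survives in $\beta([\psi,\psi](X,Y))$, and Cartan's formula converts it into $-d\beta(\psi X, \psi Y)$, so that normality yields $d\beta(\psi X, \psi Y) = d\beta(X,Y)$. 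Using $i_W d\beta = 0$ and $\psi^{2} = -\id + \beta\otimes W$, this is exactly what \eqref{eq:1:BW} becomes once the $\psi$-invariance of $\eta$ is imposed.

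The most delicate step is the $\beta$-contraction producing the identity $d\beta(\psi X, \psi Y) = d\beta(X, Y)$: the computation is sensitive to the normalization of $d\beta$ and of the Nijenhuis tensor, and one must verify that the factor $2$ in the paper's normality condition is precisely the one compatible with this identity (and not, say, a different multiple that would be inconsistent with $\beta$ being a contact form). Once this check is in place, the three conditions of Lemma~\ref{lemma:BW-cont-pair-str-normality} are seen to be simultaneously equivalent, under the $\psi$-invariance hypothesis, to the normality of the almost contact structure $(\beta, W, \psi)$ on the base, which completes the proof.
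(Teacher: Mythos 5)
Your proposal is correct and follows essentially the same route as the paper: under $\psi$-invariance of $\eta$ the three conditions of Lemma \ref{lemma:BW-cont-pair-str-normality} reduce to a system whose third equation is the normality of $(\beta,W,\psi)$, and that equation implies the other two. The paper merely asserts "the third equation implies the others," whereas you supply the verification (the $\beta$-contraction giving $d\beta(\psi X,\psi Y)=d\beta(X,Y)$ and the $X=W$ substitution giving $L_W\psi=0$), including the correct caveat about the normalization of $d\beta$ needed for the factor of $2$ to be consistent.
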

\begin{proof}
If $\eta$ is invariant under $\psi$, the conditions \eqref{eq:1:BW}, \eqref{eq:2:BW} and \eqref{eq:3:BW} reduce to
the following system
\begin{equation}
\left \{
\begin{aligned}
L_W \psi &=0\\
-d\beta (\psi X, Y)- d\beta(X , \psi Y)&=0\\
[\psi, \psi] (X,Y) + 2 d\beta (X,Y)&=0 .
\end{aligned} \right.
\end{equation}
The third equation implies the others and it is exactly the
condition for $(\beta, W, \psi)$ to be a normal almost contact
structure.
\end{proof}
\begin{theorem}\label{corBW-normality}
With the same assumptions as  in Lemma
\ref{lemma:BW-cont-pair-str-normality}, let us suppose that $\eta$
is invariant under $\psi$ and that $\psi$ is decomposable. Then
the $\bS^1$-invariant contact pair structure on the total space of the Boothby--Wang
fibration is normal if and only if the almost contact-symplectic
structure on the base is a normal almost contact structure.
\end{theorem}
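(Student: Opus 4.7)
The plan is to reduce normality of the contact pair structure on the total space to two conditions, namely the integrability of $J$ and the integrability of $T$, and then to dispatch each using results already established in the paper. The decomposability hypothesis on $\psi$ plays the key role, as it propagates to $\phi$ via the explicit construction of Theorem~\ref{th:BW-cont-pair-str}, thereby enabling the use of Proposition~\ref{JTinth0}.

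For the forward implication, suppose the $\mathbb{S}^1$-invariant contact pair structure $(\alpha_1,\alpha_2,\phi)$ on $M$ is normal. Then by definition both $N_J$ and $N_T$ vanish, in particular $J$ is integrable. By Theorem~\ref{corBW}, applied under the standing assumption that $\eta$ is $\psi$-invariant, this forces $(\beta,W,\psi)$ to be a normal almost contact structure on $B$.

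For the reverse implication, assume that $(\beta,W,\psi)$ is a normal almost contact structure on $B$. Theorem~\ref{corBW} immediately yields the integrability of $J$ on $M$. To obtain the integrability of $T$ and thereby conclude normality of the whole contact pair structure, I would invoke Proposition~\ref{JTinth0}, whose hypotheses must be verified: we need $\phi$ to be decomposable and $J$ to be integrable. The latter has just been established, and the former follows from the hypothesis that $\psi$ is decomposable, together with the last assertion of Theorem~\ref{th:BW-cont-pair-str}. Finally, the construction of $\phi$ in the proof of Theorem~\ref{th:BW-cont-pair-str} gave us $L_{Z_1}\phi=0$ (since $Z_1$ is tangent to the $\mathbb{S}^1$-action, $\phi$ is built from horizontal lifts of tensors on $B$, and everything in sight is $\mathbb{S}^1$-invariant). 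Proposition~\ref{JTinth0} then gives the integrability of $T$, and hence the normality of $(\alpha_1,\alpha_2,\phi)$.

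The only point that requires any care, and which I would view as the main obstacle, is the verification that $L_{Z_1}\phi=0$ holds for the $\phi$ constructed in Theorem~\ref{th:BW-cont-pair-str}; this is essentially the assertion noted at the end of that proof, and follows from the fact that $\phi$ is defined pointwise by horizontally lifting the action of $\psi$ through a connection whose structure group is $\mathbb{S}^1$ acting by the flow of $Z_1$. Once this is in hand, the theorem is a direct chaining of Theorem~\ref{corBW}, Theorem~\ref{th:BW-cont-pair-str}, and Proposition~\ref{JTinth0}, with no further computation required.
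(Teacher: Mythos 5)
Your proof is correct and follows essentially the same route as the paper: the paper also deduces decomposability of $\phi$ and $L_{Z_1}\phi=0$ from Theorem~\ref{th:BW-cont-pair-str}, reduces normality to the integrability of $J$ alone (via Theorem~\ref{fifi}, which is itself just the combination of Theorem~\ref{intJT} with the Proposition~\ref{JTinth0} you invoke), and then applies Theorem~\ref{corBW}. The only cosmetic difference is that you cite Proposition~\ref{JTinth0} directly rather than its packaged form in Theorem~\ref{fifi}.
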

\begin{proof}
Theorem \ref{th:BW-cont-pair-str} implies that $\psi$ is decomposable and $L_{Z_1} \phi =0$. By Theorem \ref{fifi} the
normality of the pair is equivalent to the integrability of $J$,
which follows from Theorem \ref{corBW}.
\end{proof}

We end this section with some examples:
\begin{example}
Taking for example a flat bundle where the base space is a closed
K\"ahler manifold with integral K\"ahler class (that is a
projective variety) and the fiber is a closed normal contact
manifold, yields a contact symplectic pair verifying the assumptions
of Theorem \ref{corBW-normality}.
\end{example}

\begin{example}
If the almost contact-symplectic structure $(\beta, \eta, \psi)$ has decomposable $\psi$ and is endowed with an associated metric
 as in Subsection \ref{subsec:alm-cont-sympl-str},
then the assumptions of Theorem \ref{corBW-normality} are satisfied.
\end{example}

\begin{example}
Using the double Boothby-Wang fibration over a closed manifold $B$ endowed with a
symplectic pair $(\omega_1 , \omega_2)$ such that $[\omega_i] \in
H^2 (B , \bZ)$ and a complex structure $J$ preserving the tangent spaces of the
foliations and compatible, on each leaf, with the symplectic form
induced by the pair, also gives an example for the Theorem \ref{corBW-normality}.

An interesting example of the former situation, already used in
\cite{BK}, is given by the quotient of a polydisc $\bH^2 \times
\bH^2$ by an irreducible lattice of the identity component of its
isometry group, where $\bH^2$ is the hyperbolic plane. In this case the pair is given by the K\"ahler
forms on each factor and the corresponding cohomology classes
are integral. More generally one could consider a product of $n$
copies of $\bH^2$.
\end{example}

\begin{remark}
Again with the Boothby--Wang fibration we obtain new constructions
of closed complex manifolds.
\end{remark}
\section*{Acknowledgement}
The first author is grateful to Dieter Kotschick for his help (in particular for suggesting Examples \ref{exD1} and \ref{exD2}) during the stay at the Centro de Giorgi in Pisa, in March 2008. The authors wish to thank Stefano Montaldo for his useful comments and Michel Goze for his encouragement. The authors also wish to thank the referee for his remarks that have improved the paper and, in particular, for pointing out references \cite{Abe, Blair2}.


\bibliographystyle{amsplain}

\end{document}